\newtheorem{theorem}{Theorem}[section]
\newtheorem{corollary}{Corollary}[section]
\theoremstyle{remark}
\newtheorem{remark}{Remark}[section]
\newtheorem{example}{Example}[section]
\numberwithin{equation}{section}
\renewcommand{\S}{S}
\newcommand{\ggddf}{GGDDF}
\newcommand{\ggcdf}{GGCDF}
\newcommand{\epts}[3]{\ensuremath{{#1}_s^{#2}(#3)}}
\newcommand{\eptd}[3]{\ensuremath{{#1}_d^{#2}(#3)}}
\title{Scheduling of energy storage}
\author{Stan Zachary\footnote{Heriot-Watt University, Edinburgh, UK.
    Research supported by EPSRC grant EP/I017054/1},
  Simon Tindemans\footnote{Delft University of Technology,
    Netherlands and Alan Turing Institute, London, UK},
  Michael Evans\footnote{Imperial College London, UK},\\
  James Cruise\footnote{Heriot-Watt University.  Research
    supported by EPSRC grant EP/I017054/1}
  \ and David Angeli\footnote{Imperial College London, UK and
    University of Florence, Italy}
}
\date{\today}
\begin{document}

\maketitle

\begin{abstract}
  The increasing reliance on renewable energy generation means that
  storage may well play a much greater role in the balancing of future
  electricity systems.  We show how heterogeneous stores, differing in
  capacity and rate constraints, may be optimally, or nearly
  optimally, scheduled to assist in such balancing, with the aim of
  minimising the total imbalance (unserved energy) over any given
  period of time.  It further turns out that in many cases the optimal
  policies are such that the optimal decision at each point in time is
  independent of the future evolution of the supply-demand balance in
  the system, so that these policies remain optimal in a stochastic
  environment.
\end{abstract}

\section{Introduction and model}
\label{sec:introduction}

A future much greater reliance on renewable energy means that there is
likely to be corresponding much greater need for storage in order to
keep electricity systems balanced---see~\cite{Strbacetal,NSPN}.  The
optimal operation of energy storage for such balancing may be
considered from the viewpoint of the provider
(see~\cite{SDJW,CZ,DEKM,GTL,PADS,WEITZEL2018582} and the references
therein), or from that of the system operator, who is seeking to
schedule given storage resources so as to balance the system as far as
possible.  The latter problem has only received significant attention
relatively recently, and then mostly for the problem of scheduling
initially full stores so as to cover periods of continuous energy
shortfall---see, e.g., \cite{NGECR18} for practical applications in
the context of the GB energy system, \cite{KHAN201839, Sioshansi2014,
  ZHOU201512} for dynamic programming and simulation approaches,
\cite{ETA-pscc,ETA-tosg, ETA-tops, CZ2018} for underlying theory and
\cite{Zhu2019} for a hybrid approach that uses an analytical discharge
policy and a recharging policy based on machine learning.

The present paper considers the problem of optimally scheduling
heterogeneous storage resources---characterised by different
capacities, input/output rate constraints, and round-trip
efficiencies---over extended periods of time in which there are both
periods of energy shortfall to be met from storage and periods of
energy surplus available to recharge storage.  Our main objective is
the minimisation of unserved energy demand over any given time
horizon, or, in a stochastic environment, the expectation of
this---often referred to as \emph{expected-energy-not-served} (EENS).
We work primarily in a deterministic environment.  Progress may be
made using strong Lagrangian or other closely related
techniques---see, e.g.,~\cite{Whi} and, for an application of this
approach to the present problem, see~\cite{CZ2018}.  However, in a
deterministic environment optimal solutions are typically far from
unique, and the optimal solution obtained by such techniques as above
is often such that at any point in time the optimal decision has some
dependence on the future evolution of the external supply-demand
balance process.  The approach of the present paper does not require
the use of such machinery and, for the problems considered in the
paper, it turns out that the optimal (or near optimal) policy is
generally \emph{non-anticipatory}, i.e.\ the optimal decision at each
successive time is independent of the evolution of the external
supply-demand balance subsequent to that time, so that results
continue to hold in a stochastic environment, both for the
minimisation of the unserved energy demand and for the minimisation of
any quantile of the distribution of the unserved energy demand.

We thus study a given set~$\S$ of energy stores.  At each
time~$t\in[0,\infty)$, the volume of usable energy (after accounting
for any output losses) within each store $i\in\S$ is given by $E_i(t)$,
where the latter is subject to the \emph{capacity constraint}
\begin{equation}
  \label{eq:1}
  0 \le E_i(t) \le \overline E_i. 
\end{equation}
A \emph{policy} for the use of the stores is a specification, for each
time~$t\ge0$, of the rate~$r_i(t)$, at which each store $i\in\S$
serves energy at time~$t$, where positive values of $r_i(t)$
correspond to the store discharging, and negative values of $r_i(t)$
correspond to the store charging.  Hence, in particular,
\begin{equation}
  \label{eq:2}
  E_i(t)=E_i(0)-\int_0^tr_i(u)\,du,
  \quad\text{ for all $t\ge0$.}
\end{equation}
For each store $i\in\S$, the rates~$r_i(t)$ are further required to
satisfy the power or \emph{rate constraints}
\begin{equation}
  \label{eq:3}
  -P'_i \le r_i(t) \le P_i,
  \quad\text{ for all $t\ge0$,}
\end{equation}
for given constants $P'_i\ge0$ and $P_i>0$.  Finally, each store
$i\in\S$ is assumed to have a \emph{round-trip efficiency}
$0<\eta_i\le1$, so that, at any time~$t$ such that the store is
charging (i.e.\ $r_i(t)<0$) the rate at which energy must be supplied
to the store from some external source is given by $-r_i(t)/\eta_i$
(recall that the level of energy within a store is measured as that
which it may usefully output).

The stores are used to assist in managing some given \emph{demand
  process} $(d(t), t\ge0)$, defined for all times~$t\ge0$, positive
values of which correspond to an external energy demand to be met
(perhaps partially) from the stores, and negative values of which
correspond to an external energy surplus which may be used to recharge
the stores.

In Section~\ref{sec:pure-discharge-model} we study the case in which
the demand process $(d(t),\,t\in[0,T])$ is nonnegative over some given
time interval $[0,T]$ and is to be served as far as possible over that
interval by the stores, subject to the constraint that the latter may
only discharge for $t\in[0,T]$.  In particular this may be appropriate
to the situation in which storage is used to cover continuous periods
of what would otherwise be energy shortfall (e.g.\ periods of daily
peak demand), but may readily be fully recharged between such periods.
We show that there is a policy which minimises the unserved energy
demand 
\begin{equation}
  \label{eq:4}
  \int_0^T\max\biggl(d(t) - \sum_{i\in\S}r_i(t),\,0\biggr)\,dt,
\end{equation}
and in which the rates~$r_i(t)$, $i\in\S$, at each time~$t$ depend
only on $d(t)$ and the energy in each store at time~$t$.  This policy
therefore continues to be optimal in a stochastic environment.  The
results in this section gather together---and, in considering
arbitrary time intervals, extend---results collectively obtained in
\cite{Nash1978, ETA-pscc, ETA-tosg, ETA-tops, CZ2018}, but are now
unified and presented with considerably simpler proofs, laying a
necessary foundation for subsequent sections.

In Section~\ref{sec:disch-with-cross} we continue to assume a
nonnegative demand process over some time interval $[0,T]$, but allow
that individual stores may both charge and discharge over that
interval, typically corresponding to the situation is which
\emph{cross-charging} between stores is allowed.  We show that such
cross-charging may occasionally be helpful, but give results which
identify common situations in which it is not.  In particular, we show
that cross-charging cannot be helpful when, as discussed in the
preceding paragraph, storage may be fully recharged between periods of
external energy shortfall and in which the energy shortfall during
such periods is \emph{unimodal}, increasing to a maximum and
thereafter decreasing.

Finally, in Section~\ref{sec:charging-discharging} we consider the
general case in which the demand process may be both positive and
negative, where negative values have the interpretation given above.
We study the situation in which the stores have a common round-trip
efficiency, and use earlier results to identify heuristic policies for
the (near) optimal management of the storage, and to derive conditions
under which they are truly optimal.

\section{Pure discharge model}
\label{sec:pure-discharge-model}

In this section we take the demand process $(d(t), t\ge0)$ to be
nonnegative over some time interval of interest.  Without loss of
generality---by, if necessary, redefining the demand process to be
zero outside it---we may take this time interval to consist of the
entire positive half-line, so that $d(t)\ge0$ for all $t\ge0$.  We
assume that, throughout this time interval, each store~$i\in\S$ may
only discharge, so that the rate constraint~\eqref{eq:3} is replaced
by
\begin{equation}
  \label{eq:5}
    0 \le r_i(t) \le P_i,
  \quad\text{ for all $t\ge0$,}
\end{equation}
and that
\begin{equation}
  \label{eq:6}
  \sum_{i\in\S}r_i(t) \le d(t),
  \qquad t \ge 0.
\end{equation}
The energy~$E_t(t)$ in each store~$i$ at each time~$t\ge0$ is then as
given by~\eqref{eq:2} and is a (weakly) decreasing function of~$t$
which we continue to require to satisfy~\eqref{eq:1} (though the
second inequality in~\eqref{eq:1} plays no part in the analysis of
this section).  Our objective is to choose rate processes
$(r_i(t),\,t\ge0)$ for all stores~$i\in\S$, satisfying the above
constraints, with the objective of either satisfying~\eqref{eq:6} with
equality for all~$t$ in some interval $[0,T]$ where $T$ is as large as
possible, or else that of minimising the \emph{unserved energy
  demand}, given by~\eqref{eq:4}, over any given time interval $[0,T]$
(where we allow also $T=\infty$).

Under any given policy for the use of the stores in~$\S$, the
\emph{further} capabilities of the energy contained within that set at
and subsequent to any given time~$t\ge0$ are defined by the rate
constraints~$P_i$ and by the residual stored energies
$(E_i(t),\,i\in\S)$ at time~$t$.  It is helpful to have some efficient
way of representing this \emph{residual stored-energy configuration}.
This should be sufficient to characterise the set of residual demand
processes $(d(t'),\,t'\ge t)$ which may be fully served at and
subsequent to the time~$t$.  For any such time~$t$, define the
(residual) \emph{burst-power profile} of the stored-energy
configuration at that time as the necessarily decreasing function
$s^t(u)$ of $u$ given by
\begin{equation}
  \label{eq:7}
  s^t(u) = \sum_{i\in\S\colon E_i(t)/P_i \ge u} P_i
\end{equation}
(see also Figure~\ref{fig:lgtf}).
For each store~$i\in\S$, we refer to the quantity $E_i(t)/P_i$ as the
\emph{(residual) discharge-duration} of the store~$i$ at the time~$t$.
This is the length of further time for which the store~$i$ could
supply energy if it did so at its maximum rate.  Thus the integral of
$s^t(u')$ from~$0$ to any time~$u$ is the maximum amount of further
energy which can possibly be supplied by the stores between the
times~$t$ and $t+u$.  Define also the \emph{energy-power transform}
$\epts{e}{t}{p}$ \cite{ETA-tosg} of the \emph{burst-power profile} at
the time~$t$ to be given by
\begin{equation}
  \label{eq:8}
  \epts{e}{t}{p} = \int_0^\infty \max(0, s^t(u) - p)\,du, \qquad p \ge 0.
\end{equation}
This has the interpretation that, for each~$t$ and for each $p\ge0$,
the quantity $\epts{e}{t}{p}$ would be the energy supplied above the
reference output~$p$ if all stores output at their maximum rates for
as long as possible (i.e.\ until empty) subsequent to the time~$t$
(again see Figure~\ref{fig:lgtf}).
Note that, for each time~$t$, the burst-power profile (function
of~$u$) given by $s^t(u)$ is recoverable from the energy-power
transform (function of~$p$) given by $\epts{e}{t}{p}$.

Observe that $\epts{e}{t}{p}$ is a (weakly) decreasing function of
both $p$ and $t$.  For each $t\ge0$, the quantity $\epts{e}{t}{0}$ is
the total energy in the stores at time~$t$, and $\epts{e}{t}{p}$ is a
convex, piecewise linear, (weakly) decreasing function of $p$ which is
zero for all $p\ge\sum_{i\in\S:E_i(t)>0}P_i$.  The quantity
$\sum_{i\in\S:E_i(t)>0}P_i$ is also the maximum rate at which demand
which may be served by the stored-energy configuration at time~$t$,
and is of course decreasing in~$t$.

For each~$T>0$ and for each~$t\in[0,T]$, define also the
\emph{energy-power transform} $\eptd{e}{t,T}{p}$ of the \emph{demand
  process} on the interval $[t,T]$ to be given by
\begin{equation}
  \label{eq:9}
  \eptd{e}{t,T}{p} = \int_t^T\max(0, d(u) - p)\,du, \qquad p \ge 0.
\end{equation}
We allow also $T=\infty$, and write $\eptd{e}{t}{p}$ for
$\eptd{e}{t,\infty}{p}$.  The quantity $\eptd{e}{t,T}{p}$ has the
interpretation that it would be the unserved energy demand over the
interval $[t,T]$ demand if power were supplied at a constant rate~$p$
during that interval.  For each~$T$ and for each $p\ge0$, the function
$\eptd{e}{t,T}{p}$ is (weakly) decreasing in~$t$ and, for each
$t\ge0$, the function $\eptd{e}{t,T}{p}$ is convex and (weakly)
decreasing in~$p$ and is zero for all $p\ge\max_{u\ge t}d(u)$.

We shall say that the (residual) stored-energy configuration
$(E_i(t), i\in\S)$ at any time~$t$ is \emph{balanced} at time~$t$ if
and only if $E_i(t)/P_i$ is constant over all stores $i\in\S$.  If the
stored energy configuration is balanced at time~$t$, then it may be
kept balanced at all subsequent times by always serving energy from
each individual store~$i\in\S$ at a rate which is proportional
to~$P_i$.  Thus, in this case and under such a policy, the stores may
fully serve any residual demand process $(d(u), u\ge t)$ such that
\begin{equation}
  \label{eq:10}
  d(u) \le \hat P, \text{ for all $u\ge t$,}
  \qquad\text{and}\qquad
  \int_t^\infty d(u)\,du \le \hat E(t),
\end{equation}
where $\hat P=\sum_{i\in\S}P_i$ and $\hat E(t)=\sum_{i\in\S}E_i(t)$.
Since, under \emph{any} policy, these conditions are clearly also
necessary in order that the stores, balanced at time~$t$ as above, may
fully serve a given residual demand process subsequent to time~$t$, it
follows that the above policy for the subsequent use of a balanced
energy configuration is optimal in terms of its ability to satisfy any
requested demand.  Indeed the balanced store configuration at time~$t$
has the same subsequent energy-serving capability as a single store
with the same total energy content $\hat E(t)$ and a maximum output
rate of $\hat P$.

When the residual stored-energy configuration $(E_i(t), i\in\S)$ at
time~$t$ is balanced as above, the corresponding energy-power
transform $\epts{e}{t}{p}$ decreases \emph{linearly} in $p$ from
$\hat E(t)$ when $p=0$ to zero when $p=\hat P$ (and is zero
thereafter).  The conditions~\eqref{eq:10} on the residual demand
process $(d(u), u\ge t)$ are equivalent to
$\eptd{e}{t}{0}\le\hat E(t)$ and $\eptd{e}{t}{\hat P}=0$, where
$\eptd{e}{t}{p}$ is the corresponding energy-power transform of that
process on $[0,\infty)$.  Since the latter is convex, it follows that
the residual demand at and subsequent to time~$t$ may be completely
served if and only if $\eptd{e}{t}{p}\le\epts{e}{t}{p}$ for all
$p\ge0$, and is then served by keeping the residual stored-energy
configuration balanced subsequent to time~$t$.  It is a consequence of
Theorem~\ref{theorem:1} below that, under a suitable policy for the
use of the stores, this result extends to general stored-energy
configurations. 

Suppose that, under a given policy for the use of the set of
stores~$\S$, the total rate at which energy is to be served at each
time~$t\ge0$ is given by $\bar d(t)\le d(t)$.  We shall say that such
a policy is a \emph{greatest-discharge-duration-first} policy if, at
each successive time~$t$, the rates at which the individual stores
$i\in\S$ serve energy is given by prioritising the use of the stores
in descending order of their discharge-durations $E_i(t)/P_i$.  More
exactly, under this policy at each time~$t$ the stores are grouped
according to their current discharge-durations~$E_i(t)/P_i$ (so that
stores belong to the same group if and only if their
discharge-durations are equal); sufficient groups are then selected in
descending order of their discharge-durations such that, using the
stores within each group at their maximum rates (i.e.\ each store~$i$
within a selected group serves energy at a rate~$P_i$), the total rate
at which energy is served is the required~$\bar d(t)$; however, in
order to meet precisely the total rate~$\bar d(t)$, each store in the
\emph{last} group thus selected may only require to serve energy at
some fractional rate~$\lambda P_i$ for some constant~$\lambda$ such
that $0<\lambda\le1$.  (See Figure~\ref{fig:lgtf}.)  As time~$t$
increases, any such greatest-discharge-duration-first policy gradually
equalises over stores the discharge-durations~$E_i(t)/P_i$, thus
pushing the residual stored energy configuration towards a balanced
state as defined above.  Additionally, under this policy, once the
discharge-durations within any set of stores have become equal they
remain equal for all subsequent times.  Thus, when groups of stores as
defined above coalesce they remain coalesced, and further the (weak)
ordering of stores by their discharge-durations remains unchanged over
time.

\begin{figure}[!ht]
  \centering
  \includegraphics[scale=0.6]{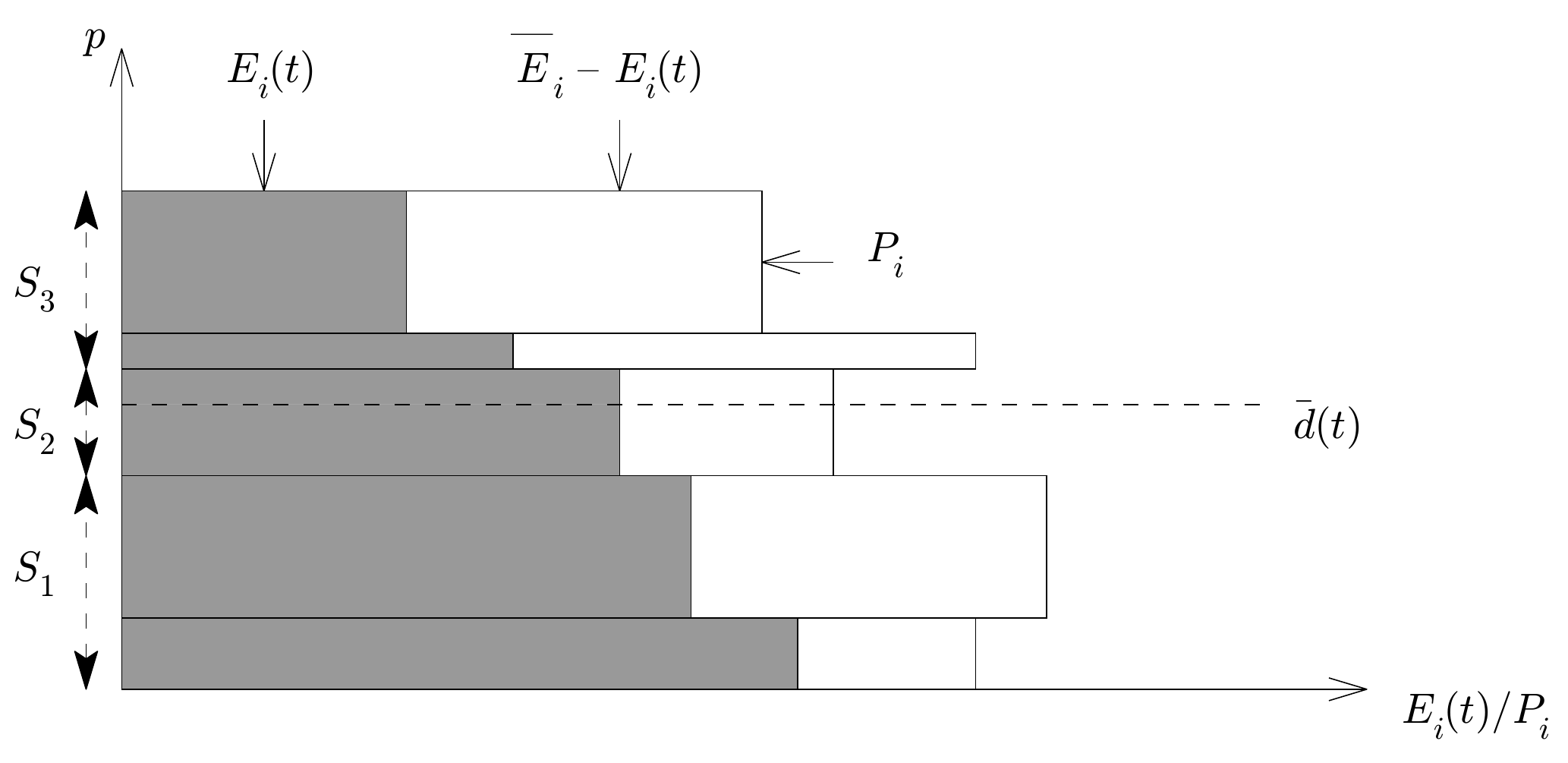}
  \caption{Greatest-discharge-duration-first policy for five stores.
    The shaded areas correspond to the residual energies $E_i(t)$
    within the stores at time~$t$.  Stores are selected in descending
    order of $E_i(t)/P_i$.  (Thus the total shaded area is the area
    under the burst-power profile at time~$t$, while the shaded area
    above any level~$p$ defines the energy-power transform
    $\epts{e}{p}{t}$ at time~$t$.)  When $\bar d(t)$ is the energy to
    be served, $\S_1$, $S_2$ and $\S_3$ are the sets of stores which
    are fully, partially, or not utilised at time~$t$.}
  \label{fig:lgtf}
\end{figure}

We shall say that a policy is \emph{greedy} if, at each successive
time~$t>0$, it serves as much as possible of the demand~$d(t)$ at that
time, i.e.\ if, under this policy,
\begin{displaymath}
  \sum_{i\in\S}r_i(t) = \min\left(d(t),\,\sum_{i\in\S: E_i(t)>0}P_i\right),
  \quad\text{for all $t\ge0$.}
\end{displaymath}
Note that there is a unique \emph{greedy
  greatest-discharge-duration-first} (\ggddf) policy.  This policy was
independently proposed in \cite{Nash1978} in the context of water
management, and in \cite{ETA-pscc, CZ2018} for the current context of
energy storage.  A discrete-time algorithm implementing this policy
for a piecewise constant demand process is given in
\cite{ETA-tops}. Finally, we note that \cite{Zhu2019} introduced a
closely related discharge policy for the class of \emph{constant}
demand signals ($d(t)=d$, for $t \in [0,T]$ and $d(t)=0$ for $t > T$),
where $d$ (but not $T$) is known to the dispatcher in advance. For
this limited class of signals, the proposed policy is optimal in the
sense that it minimises unserved energy demand. It will be shown below
that the \ggddf\ policy is optimal in the same sense, but for a much
broader class of demand signals that includes constant signals as a
special case.

Suppose now that it may not be possible to serve the entire demand
process $(d(t),\,t\ge0)$ and that our objective is the minimisation of
the total unserved energy demand~\eqref{eq:4} over some time interval
$[0,T]$.  Theorem~\ref{theorem:1} below is central to the rest of the
paper.  It gathers together and provides a unified, economical and
accessible treatment of results previously obtained by various
authors.  In particular the optimality of the \ggddf\ policy is
established in \cite{Nash1978, ETA-pscc, CZ2018},
while the implied necessary and sufficient condition for this policy
to be able to serve a given demand process is derived by
\cite{ETA-tosg, CZ2018}, and the expression for
the unserved energy demand under this policy is given by
\cite{ETA-tops}.   In considering arbitrary time
intervals, the theorem also provides a very modest extension of these
results. 

\begin{theorem}
  \label{theorem:1}
  For the given demand function $(d(t),\,t\ge0)$ and the given initial
  energy configuration $(E_i(0),\,i\in\S)$, the total unserved energy
  demand~(\ref{eq:4}) over any time interval $[0,T]$ is minimised by
  the \ggddf\ policy, and this minimum is given by
  \begin{equation}
    \label{eq:11}
      \max_{p\ge0}\bigl(\eptd{e}{0,T}{p} - \epts{e}{0}{p}\bigr).
  \end{equation}
\end{theorem}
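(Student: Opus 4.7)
I would prove Theorem~\ref{theorem:1} in two stages. First, I would establish a universal \emph{lower bound} showing that every feasible discharge policy incurs unserved energy at least $\max_{p\ge0}(\eptd{e}{0,T}{p} - \epts{e}{0}{p})$. Second, I would verify that the \ggddf{} policy meets this bound with equality.

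For the lower bound, fix any $p\ge0$ and write $r(t)=\sum_{i\in\S}r_i(t)$. The elementary inequality
\[
\max(a-b,0)\;\ge\;\max(a-p,0)-\max(b-p,0),
\]
valid for all non-negative reals $a,b$ and all $p\ge0$ (verified by case analysis on whether $a,b$ lie above or below $p$), applied pointwise to $a=d(t)$ and $b=r(t)$ and then integrated over $[0,T]$, yields
\[
\int_0^T\max(d(t)-r(t),0)\,dt \;\ge\; \eptd{e}{0,T}{p}-\int_0^T\max(r(t)-p,0)\,dt.
\]
The crucial supply-side estimate is that the second integral is bounded above by $\epts{e}{0}{p}$ for every feasible policy: the aggregate energy served ``above the reference rate $p$'' cannot exceed the area of the burst-power profile $s^0$ above the level~$p$. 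I would prove this by a rearrangement argument on the individual rate functions $r_i(\cdot)$, showing that reallocating each store's service toward the times when the aggregate output is largest can only increase $\int_0^T(r(t)-p)_+\,dt$, and that the resulting extremal schedule is precisely the ``stack from the top'' schedule whose above-$p$ output equals $\epts{e}{0}{p}$ by the definition~(\ref{eq:8}). Taking the maximum over~$p$ in the displayed inequality then delivers the lower bound.

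For the upper bound I would fix a maximiser~$p^\ast$ of (\ref{eq:11}); existence is guaranteed because $p\mapsto\eptd{e}{0,T}{p}$ and $p\mapsto\epts{e}{0}{p}$ are both convex, decreasing, piecewise-linear and eventually zero, so the maximiser can be taken at a breakpoint of $\epts{e}{0}{\cdot}$. Under the \ggddf{} policy, stores with equal discharge-duration remain coalesced forever and groups merge monotonically, so the burst-power profile $s^t(\cdot)$ is depleted uniformly ``from the top''. Using this structural property, I would carry out a direct piecewise calculation: between coalescence/exhaustion events the instantaneous above-$p^\ast$ depletion of $\epts{e}{t}{p^\ast}$ equals the instantaneous above-$p^\ast$ shortfall $(d(t)-p^\ast)_+ - (r(t)-p^\ast)_+$, so on integrating the cumulative unserved energy picks up exactly $\eptd{e}{0,T}{p^\ast}-\epts{e}{0}{p^\ast}$. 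Combined with the lower bound, this gives both the optimality of \ggddf{} and the explicit value~(\ref{eq:11}).

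The principal technical obstacle will be the supply-side inequality $\int_0^T(r(t)-p)_+\,dt\le\epts{e}{0}{p}$: it is intuitively clear that $\epts{e}{0}{p}$ caps the above-$p$ energy deliverable by the set of stores, but converting that intuition into a clean proof requires either a careful majorisation/exchange argument on the individual rates $r_i(\cdot)$ or an equivalent change of order of integration between the time variable $t$ and the discharge-duration variable $u$ appearing in~(\ref{eq:7})--(\ref{eq:8}). Once this supply bound is in hand, both halves of the theorem follow from essentially mechanical calculations.
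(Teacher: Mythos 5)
Your Stage 1 (the universal lower bound) is sound, and it reaches optimality over \emph{all} policies more directly than the paper does: the paper first bounds only the \ggddf\ policy via the derivative inequalities for $\epts{e}{t}{p}$ and then needs a separate necessity argument for the condition $\epts{e}{0}{p}\ge\eptd{e}{0,T}{p}$ plus a truncation step to cover arbitrary policies. The supply-side estimate you identify as the main obstacle has a two-line proof which is essentially the paper's inequality~(\ref{eq:18}): letting $A_p=\{t\in[0,T]:r(t)>p\}$ have measure $m_p$, each store contributes at most $\min(P_im_p,E_i(0))$ to $\int_{A_p}r(t)\,dt$, and $\sum_{i\in\S}\min(P_im_p,E_i(0))=\int_0^{m_p}s^0(u)\,du$; subtracting $pm_p$ gives $\int_0^T\max(r(t)-p,0)\,dt\le\int_0^{m_p}(s^0(u)-p)\,du\le\epts{e}{0}{p}$. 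So that half is correct and needs no heavy rearrangement machinery.

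Stage 2, however, has a genuine gap in the choice of the reference power. For the upper bound to close you need three simultaneous equalities: (i) $-\frac{d}{dt}\epts{e}{t}{p^*}=\max(0,r(t)-p^*)$ for \emph{all} $t\in[0,T]$; (ii) at every instant with unserved demand, $r(t)\ge p^*$, so that $d(t)-r(t)=\max(0,d(t)-p^*)-\max(0,r(t)-p^*)$ there; and (iii) $\epts{e}{T}{p^*}=0$. A maximiser of~(\ref{eq:11}) taken at a breakpoint of $\epts{e}{0}{\cdot}$, i.e.\ at a point of $\pi(0)$, need satisfy none of these: the breakpoint set $\pi(t)$ \emph{shrinks} as groups of stores coalesce, so a point of $\pi(0)$ may cease to be a breakpoint later, after which the depletion rate of $\epts{e}{t}{p^*}$ can strictly exceed $\max(0,r(t)-p^*)$ and your piecewise identity fails (you then recover only the inequality you already had from Stage 1). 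The correct reference power is $\hat p=\min\{p\ge0:\epts{e}{T}{p}=0\}$, defined from the \emph{terminal} configuration of the \ggddf\ run; one then proves (i) because $\hat p\in\pi(T)\subseteq\pi(t)$ for all $t$, (ii) because at any shortfall time all non-empty stores run at full power and $\epts{e}{t}{r(t)}=0$ forces $r(t)\ge\hat p$, and (iii) by construction. That $\hat p$ maximises~(\ref{eq:11}) emerges as a by-product of the completed argument; assuming at the outset that your chosen maximiser has these properties is circular. Repairing Stage 2 therefore requires importing exactly the terminal-state construction and the monotonicity of $\pi(t)$ that the paper uses.
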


\begin{proof}
  We show first that the use of the \ggddf\ policy results in unserved
  energy as given by~\eqref{eq:11}.  For each~$t\ge0$, let
  $\bar d(t) \le d(t)$ be the demand actually served at time~$t$ under
  the \ggddf\ policy.  Similarly, for each such~$t$, let $E_i(t)$ be
  the energy remaining in each store~$i\in\S$ at time~$t$ under the
  \ggddf\ policy and let $\epts{e}{t}{p}$, $p\ge0$, be the
  corresponding energy-power transform.  Suppose that, at time~$t$,
  stores in $\S$ are ranked in descending order of their
  discharge-durations $E_i(t)/P_i$ (as is required for the
  implementation of the \ggddf\ policy).  Let $\pi(t)$ be the set of
  values of~$p$ such that $p = \sum_{i=1}^j P_i$ 
  for some~$j$ such that $E_{j+1}(t)/P_{j+1} < E_j(t)/P_j$, and
  include also in the set~$\pi(t)$ the values $p=0$ and (if no store is
  empty) $p=\sum_{i\in\S}P_i$.  (See the example of
  Figure~\ref{fig:epts} below.)  It follows from the earlier
  observation that once, under the \ggddf\ policy, the
  discharge-durations of any two stores have become equal they remain
  equal thereafter, that
  \begin{equation}
    \label{eq:12}
    \pi(u) \supseteq \pi(t),
    \quad \text{ for all $0 \le u \le t$.}
  \end{equation}
  Further, from~\eqref{eq:8} and from the definition of the \ggddf\
  policy, it is readily checked that, under this policy, at each
  time~$t$ and for all $p\ge0$, the derivative with respect to~$t$ of
  the energy-power transform $\epts{e}{t}{p}$ of the residual energy
  configuration satisfies
  \begin{equation}
    \label{eq:27}
    \frac{d}{dt}(\epts{e}{t}{p}) \le \min(0, p - \bar d(t)),
  \end{equation}
  with equality for all~$p\in\pi(t)$.  The latter equality is easily
  seen, and the general result~(\ref{eq:27}) follows from the
  observation that the above derivative varies linearly in~$p$ between
  successive points of the set $\pi(t)$ (again see the example of
  Figure~\ref{fig:epts}), while the function $\min(0,p - \bar d(t))$
  is concave in~$p$.  (Indeed we have equality in~(\ref{eq:27}) for
  all values of~$p$ other than for those lying within that interval
  which is bounded by two consecutive points of the set~$\pi(t)$ and
  within which $\bar d(t)$ lies.)  Similarly, from~\eqref{eq:9}, at
  each time~$t$ and for all $p\ge0$, the derivative with respect
  to~$t$ of the energy-power transform $\eptd{e}{t,T}{p}$ of the
  residual demand is given by
  \begin{equation}
    \label{eq:28}
    \frac{d}{dt}(\epts{e}{t}{p}) = \min(0, p - d(t)).
  \end{equation}
  Thus, from~(\ref{eq:27}) and~(\ref{eq:28}), under the \ggddf\
  policy, at each time~$t$ and for all $p\ge0$,
  \begin{equation}
    \label{eq:13}
    \frac{d}{dt}(\epts{e}{t}{p} - \eptd{e}{t,T}{p}) \le 
    \begin{cases}
      d(t) - \bar d(t), &  \qquad p < \bar d(t),\\
      d(t) - p, &  \qquad \bar d(t) \le p \le d(t),\\
      0, & \qquad p > d(t),
    \end{cases}
  \end{equation}
  with equality for all $p\in\pi(t)$.
  It now follows from~\eqref{eq:13} that, under the \ggddf\ policy and
  for all $p\ge0$, the unserved energy demand over the interval
  $[0,T]$ is given by
  \begin{align}
    \int_0^T(d(t) - \bar d(t))\,dt 
    & \ge \epts{e}{T}{p} 
      - \epts{e}{0}{p} + \eptd{e}{0,T}{p} \label{eq:15}\\
    & \ge \eptd{e}{0,T}{p} - \epts{e}{0}{p}, \label{eq:16}
  \end{align}
  where \eqref{eq:16} follows since necessarily
  $\epts{ e}{T}{p} \ge 0$.  It follows that the unserved energy demand
  over the time interval $[0,T]$ is greater than or equal to
  the expression given by (\ref{eq:11}).  To prove equality, define
  $\hat p = \min\{p\ge0 \colon \epts{ e}{T}{p} = 0\}$.  
  Note that $\hat p$ necessarily exists and that
  $\hat p \in \pi(T)$; the latter follows, for example, from the linearity
  of $\epts{e}{T}{p}$ between adjacent points of $\pi(T)$.  Observe
  that $\hat p \le \bar d(t)$ for all $t\in[0,T]$ such that
  $\bar d(t) < d(t)$ (i.e.\ under the \ggddf\ policy there is unserved
  demand at time~$t$); this follows since, for any such~$t$,
  necessarily $\epts{ e}{t}{\bar d(t)}=0$ and so also
  $\epts{ e}{T}{\bar d(t)}=0$.  Observe also that, by~\eqref{eq:12},
  $\hat p \in \pi(t)$ for all $t\in[0,T]$.  It follows from the above
  two observations, and by using~\eqref{eq:13}, that~\eqref{eq:15}
  holds with equality for $p=\hat p$.  Since also
  $\epts{e}{T}{\hat p}=0$, the relation~\eqref{eq:16} also holds with
  equality for $p = \hat p$.  Hence the expression~(\ref{eq:11}) also
  provides an upper bound on the unserved energy demand over the
  interval $[0,T]$ under the \ggddf\ policy.  
  (Note that, combining this with the
  earlier lower bound, it follows (i) that the quantity 
  $\eptd{e}{0,T}{p} - \epts{e}{0}{p}$ is maximised 
  for $p=\hat{p}$, and (ii) that when the demand function 
  can be fully served, we have $\eptd{e}{0,T}{\hat{p}} = \epts{e}{0}{\hat{p}}$.)  
  It now follows that, under the \ggddf\ policy,
  the unserved energy demand is as given by~\eqref{eq:11}.

  We now show that any nonnegative demand process $(d(t),\,t\ge0)$
  which may be completely served over the interval $[0,T]$ by
  \emph{some} policy, may be completely served over that interval by
  the use of the \ggddf\ policy.  To do this, it is sufficient to show
  that the condition
  \begin{equation}
    \label{eq:17}
    \epts{e}{0}{p} \ge \eptd{e}{0,T}{p}
    \qquad\text{for all $p \ge 0$}.
  \end{equation}
  is necessary (as well as sufficient) for the demand process
  $(d(t),\,t\ge0)$ to be capable of being completely served over the
  interval $[0,T]$.  For any $p\ge0$, let $\tau_p$ be the of the set
  of times~$t$ within the interval $[0,T]$ such that $d(t)\ge p$.  For
  the demand process $(d(t),\,t\ge0)$ to be completely served over the
  interval $[0,T]$, it is necessary that
  \begin{equation}
    \label{eq:18}
    \int_0^{m(\tau_p)} s^0(u)\,du
    \ge
    \int_{\tau_p} d(u)\,du,
  \end{equation}
  where $m(\tau_p)$ is the total length of the set of times $\tau_p$.
  That this is so follows since, from~\eqref{eq:7}, the integral on
  the left side of~\eqref{eq:18} is the maximum amount of energy which
  is capable of being served within any set of times of total length
  $m(\tau_p)$.  The relation~\eqref{eq:18} in turn implies that
  \begin{displaymath}
     \int_0^{m(\tau_p)} \max(0,s^0(u) - p)\,du
    \ge
    \int_{\tau_p} \max(0,d(u) - p)\,du,
  \end{displaymath}
  since $d(u)\ge p$ on the set~$\tau_p$.  However, this is simply the
  condition~\eqref{eq:17}.

  Finally, to complete the proof of the theorem, suppose that the
  nonnegative demand process $(d(t),\,t\ge0)$ is not necessarily
  completely served over the interval $[0,T]$ by any policy.  Consider
  any policy which minimises the unserved energy demand over the
  interval $[0,T]$ and let $(\hat d(t),\,t\ge0)$, with
  $\hat d(t) \le d(t)$ for all~$t$ and $d(t)=0$ for $t>T$, be the
  process of such demand as \emph{is} served over $[0,T]$ under that
  policy.  Then, by the result of the preceding paragraph (with
  $\hat d(t)$ replacing $d(t)$ for all~$t$), the process
  $(\hat d(t),\,t\ge0)$ may also be completely served over the
  interval $[0,T]$ by the use of the \ggddf\ policy.  The \ggddf\
  policy therefore also minimises the unserved energy demand over that
  interval associated with the original process $(d(t),\,t\ge0)$.
\end{proof}

Theorem~\ref{theorem:1} has the following immediate corollary, which
is fundamental in establishing the energy-power transform of a
stored-energy configuration as containing all the information as to
which future demand processes may be completely served.

\begin{corollary}
  \label{corollary:1}
  Any given demand process $(d(t), t\ge0)$ may be completely served
  over any interval $[0,T]$ (i.e.\ the unserved energy
  demand~(\ref{eq:4}) is zero) by a given energy configuration with
  (initial) energy-power transform $(\epts{e}{0}{p},\,p\ge0)$ if and
  only if $\epts{e}{0}{p} \ge \eptd{e}{0,T}{p}$ for all $p \ge 0$.
  Under this condition the demand process is completely served by the
  use of the \ggddf\ policy \cite{ETA-tosg}.
\end{corollary}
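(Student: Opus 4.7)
The plan is to read off the corollary directly from Theorem~\ref{theorem:1}. That theorem already tells us two things: (i) the minimum over all admissible policies of the total unserved energy demand on $[0,T]$ equals $\max_{p\ge0}\bigl(\eptd{e}{0,T}{p} - \epts{e}{0}{p}\bigr)$, and (ii) this minimum is actually attained by the \ggddf\ policy. So everything should follow by interpreting when this minimum is zero.

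First I would observe that the maximum in~\eqref{eq:11} is always nonnegative: for any $p$ large enough (in particular, $p \ge \max(\hat P,\,\sup_t d(t))$, or more generally any $p$ at which both transforms vanish), the difference $\eptd{e}{0,T}{p} - \epts{e}{0}{p}$ equals~$0$, so the supremum is at least $0$. Consequently the minimum unserved energy demand equals zero if and only if $\eptd{e}{0,T}{p} - \epts{e}{0}{p} \le 0$ for every $p \ge 0$, which is precisely the stated condition $\epts{e}{0}{p} \ge \eptd{e}{0,T}{p}$ for all $p\ge0$.

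For the two directions of the ``if and only if'': if the condition holds, then by Theorem~\ref{theorem:1} the \ggddf\ policy achieves unserved energy demand equal to~\eqref{eq:11}, which under the hypothesis is zero, so \ggddf\ completely serves the demand (giving both the equivalence and the final sentence of the corollary). Conversely, if some policy completely serves the demand, then the minimum unserved energy demand is zero, and by Theorem~\ref{theorem:1} this forces $\max_{p\ge0}\bigl(\eptd{e}{0,T}{p} - \epts{e}{0}{p}\bigr) = 0$, hence $\epts{e}{0}{p} \ge \eptd{e}{0,T}{p}$ for all $p\ge0$.

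Since Theorem~\ref{theorem:1} has done all the real work, there is no genuine obstacle here; the only thing to be careful about is the elementary remark that the maximum in~\eqref{eq:11} is nonnegative, so that ``minimum unserved energy is zero'' and ``the expression~\eqref{eq:11} is zero'' are the same statement.
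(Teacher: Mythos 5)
Your proposal is correct and matches the paper's treatment: the corollary is presented there as an immediate consequence of Theorem~\ref{theorem:1}, read off exactly as you do from the formula~(\ref{eq:11}) for the minimum unserved energy (together with the elementary observation that this maximum is nonnegative, e.g.\ because $\epts{e}{0}{p}$ vanishes for $p\ge\hat P$ while $\eptd{e}{0,T}{p}\ge0$). No further commentary is needed.
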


\begin{remark}\label{remark:1}
  It follows from Corollary~\ref{corollary:1} and the properties of
  the function $\epts{e}{0}{p}$ noted earlier that, for a given total
  volume of stored energy $\sum_{i\in\S}E_i(0)$ at time~$0$, the set
  of future demand processes which may be completely served is
  maximised when the stored-energy configuration at time~$0$ is
  \emph{balanced} as defined above, so that the corresponding
  energy-power transform $\epts{e}{0}{p}$ decreases linearly in~$p$.
\end{remark}

The \ggddf\ policy has the important property of being
\emph{non-anticipatory}---as defined in
Section~\ref{sec:introduction}.  It follows that the \ggddf\ policy
remains feasible within a stochastic environment, i.e.\ when, at each
successive time~$t$, the demand function $d(t')$ is known for times
$t'\le t$, but not necessarily for $t'>t$.  Since, by
Theorem~\ref{theorem:1}. the \ggddf\ policy thus minimises
unserved energy demand for all possible evolutions of the
demand function, we have the following
further corollary to that theorem.

\begin{corollary}
  \label{corollary:2}
  Suppose that (in a stochastic environment) the objective for the
  optimal serving of energy is the minimisation of the expectation of
  the unserved energy demand~(\ref{eq:4}), or the
  minimisation of any quantile of the distribution of
  the unserved energy demand.  Then this objective is
  achieved by the use of the unique \ggddf\ policy \cite{ETA-tops,
    CZ2018}.
\end{corollary}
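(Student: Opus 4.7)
The plan is to deduce Corollary~\ref{corollary:2} from Theorem~\ref{theorem:1} via a pathwise (i.e.\ realization-by-realization) comparison argument, exploiting the non-anticipatory character of the \ggddf\ policy already noted in the paragraph preceding the statement.

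First I would make precise the claim that \ggddf\ is admissible in a stochastic environment. Under \ggddf\ the rates $r_i(t)$ are determined purely by $d(t)$ and by the residual stored-energy configuration $(E_i(t),\,i\in\S)$, both of which are known at time~$t$; thus, on any probability space carrying the (possibly random) demand process $(d(t),\,t\ge0)$ adapted to a filtration $(\mathcal{F}_t)_{t\ge0}$, the \ggddf\ rate process is itself adapted to $(\mathcal{F}_t)$, and hence is a feasible non-anticipatory policy.

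Next, for any competing non-anticipatory (adapted) policy~$\pi$ and any sample path~$\omega$, Theorem~\ref{theorem:1} applied to the deterministic realisation $(d(t,\omega),\,t\ge0)$ yields
\begin{equation*}
  U_{\ggddf}(\omega) \;\le\; U_{\pi}(\omega),
\end{equation*}
where $U_{\cdot}(\omega)$ denotes the unserved energy demand~(\ref{eq:4}) over $[0,T]$ realised on~$\omega$ under the policy in question. Taking expectations of both sides preserves the inequality, so $\E\,U_{\ggddf} \le \E\,U_\pi$, establishing optimality for expected unserved energy. For quantiles, the pointwise inequality implies $\Pr(U_{\ggddf} > x) \le \Pr(U_\pi > x)$ for every $x\ge0$; consequently, for every $\alpha\in(0,1)$, the $\alpha$-quantile of $U_{\ggddf}$ is no larger than the $\alpha$-quantile of $U_\pi$, as required.

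The only genuine subtlety, and the step I would take most care over, is the first one: confirming that the \ggddf\ prescription truly defines a measurable, adapted rate process for arbitrary (measurable) demand paths, so that the pathwise inequality from Theorem~\ref{theorem:1} can be lifted to an inequality between random variables. Once that measurability/adaptedness is in hand, the remainder of the argument is a direct application of Theorem~\ref{theorem:1} pathwise combined with the elementary facts that monotone pointwise inequalities are preserved by expectation and by quantiles.
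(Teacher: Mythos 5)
Your proposal is correct and is essentially the paper's own argument: the paper likewise observes that the \ggddf\ policy is non-anticipatory and hence feasible in a stochastic environment, and that Theorem~\ref{theorem:1} gives pathwise optimality for every realisation of the demand process, from which optimality in expectation and in every quantile follows immediately. Your explicit treatment of the measurability/adaptedness point and of the passage from pointwise domination to stochastic (hence quantile) dominance merely fills in details the paper leaves implicit.
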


\begin{corollary}
  \label{corollary:2b}
  Suppose that a possibly stochastic demand signal $(d(t),t\ge 0)$
  cannot be completely served. In this case, consideration of the
  truncated signals on $t \in [0,\tilde{T}]$, for $\tilde{T}\ge0$
  shows that the \ggddf\ policy maximises the time until the storage
  fleet is first unable to serve all demand \cite{ETA-pscc}.
\end{corollary}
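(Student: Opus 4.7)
The plan is to reduce this to Theorem~\ref{theorem:1} by exploiting the fact that the \ggddf\ policy is defined purely in terms of the current demand and current stored-energy configuration, with no dependence on any time horizon. Fix a realisation of $(d(t),\,t\ge 0)$, and for any admissible policy~$\pi$ let $T^*_\pi$ denote the supremum of times~$\tilde T$ such that $\pi$ serves all demand on $[0,\tilde T]$. The goal is to show $T^*_{\ggddf} \ge T^*_\pi$ for every~$\pi$.

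First I would fix an arbitrary policy~$\pi$ and an arbitrary $\tilde T < T^*_\pi$. Applied to the truncated demand signal $(d(t),\,t\in[0,\tilde T])$, the policy~$\pi$ serves all demand, so the minimum unserved energy demand over this interval is zero. Theorem~\ref{theorem:1}---or, equivalently, Corollary~\ref{corollary:1}---then immediately implies that the \ggddf\ policy also serves all demand on $[0,\tilde T]$. The essential observation is that the \ggddf\ policy does not depend on~$\tilde T$: since its prescribed rates $r_i(t)$ depend only on the current demand $d(t)$ and the current residual energies $(E_i(t),\,i\in\S)$, the single \ggddf\ trajectory run from time~$0$ serves all demand on $[0,\tilde T]$ for every $\tilde T < T^*_\pi$, and hence on $[0, T^*_\pi)$. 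This yields $T^*_{\ggddf} \ge T^*_\pi$, as required.

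For the stochastic case the argument is pathwise: it applies to each realisation of $(d(t),\,t\ge 0)$, and the non-anticipatory property of \ggddf\ (as already exploited in Corollary~\ref{corollary:2}) ensures that it remains an admissible policy in that setting. The main potential obstacle is precisely this horizon-independence of \ggddf: without it, Theorem~\ref{theorem:1} applied separately for each $\tilde T$ would furnish only a family of different candidate policies rather than a single policy that simultaneously realises the supremum. Fortunately, horizon-independence is built into the definition of \ggddf, so the obstacle is dissolved by the construction rather than requiring further work.
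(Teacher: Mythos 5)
Your argument is correct and is precisely the one the paper intends: apply Theorem~\ref{theorem:1} (equivalently Corollary~\ref{corollary:1}) to each truncated signal on $[0,\tilde T]$, and use the fact that the \ggddf\ policy is horizon-independent and non-anticipatory so that a single trajectory works simultaneously for all $\tilde T$ and pathwise in the stochastic setting. Your explicit remark about why horizon-independence is the crux is a helpful articulation of what the paper leaves implicit, but the route is the same.
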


\begin{example}\label{ex:ggddf}
  We illustrate various features of the \ggddf\ and other policies
  with a simple example, which is adapted to present needs from one
  given by~\cite{CZ2018} and which is reasonably realistic in its
  dimensioning.  Consider a time interval $[0,4]$ and a demand process
  $(d(t),\,t\in[0,4])$ given by
  \begin{displaymath}
    d(t) =
    \begin{cases}
      200, & \quad 0 \le t \le 2,\\
      500, & \quad 2 < t \le 3,\\
      100, & \quad 3 < t \le 4.
    \end{cases}
  \end{displaymath}
  Consider also $5$ stores, initially full, each with the same rate
  constraint $P_i=100$ and with values of $E_i$ given by $100$, $150$,
  $200$, $200$, $250$ for $i = 1,\dots,5$.  These stores are to be
  used to serve as much as possible of the above demand process.  If
  time is measured in hours, power in MW (and so energy in MWh), then
  this example might correspond to a modest level of shortfall over a
  four-hour period in a country such as Great Britain, with the stores
  corresponding to four moderately large batteries.  It is readily
  verified that the \ggddf\ policy empties all five stores over the
  time period~$[0,3]$, serving all the demand during that time period,
  and none of the demand during the remaining time period $[3,4]$,
  thereby leaving (minimised) unserved energy demand of $100$.  This
  is as predicted by Theorem~\ref{theorem:1}---see
  Figure~\ref{fig:epts}, which plots the energy-power transforms
  $\epts{e}{0}{p}$ and $\eptd{e}{0}{p}$.  Various other policies also
  empty all the stores and hence minimise unserved energy demand.  One
  such is the \ggddf\ policy applied to the time-reversed demand
  process, which serves all the demand except that occurring during
  the period $[0,0.5]$ (during which none is served).  A further such
  policy is that which uses the greatest-discharge-duration-first
  policy, not greedily, but rather to serve all demand in excess of a
  given level~$k$.  For $k=25$ this policy just serves all the demand
  in excess of that level, again emptying the stores.  However,
  neither of the above two policies, viewed as algorithms, could be
  implemented in a stochastic environment, as in each case the
  decisions to be made at each successive time require a knowledge of
  the demand process over the entire time period $[0,4]$.
  
  \begin{figure}[!ht]
    \centering
    \vspace{-4ex}
    \includegraphics[scale=0.7]{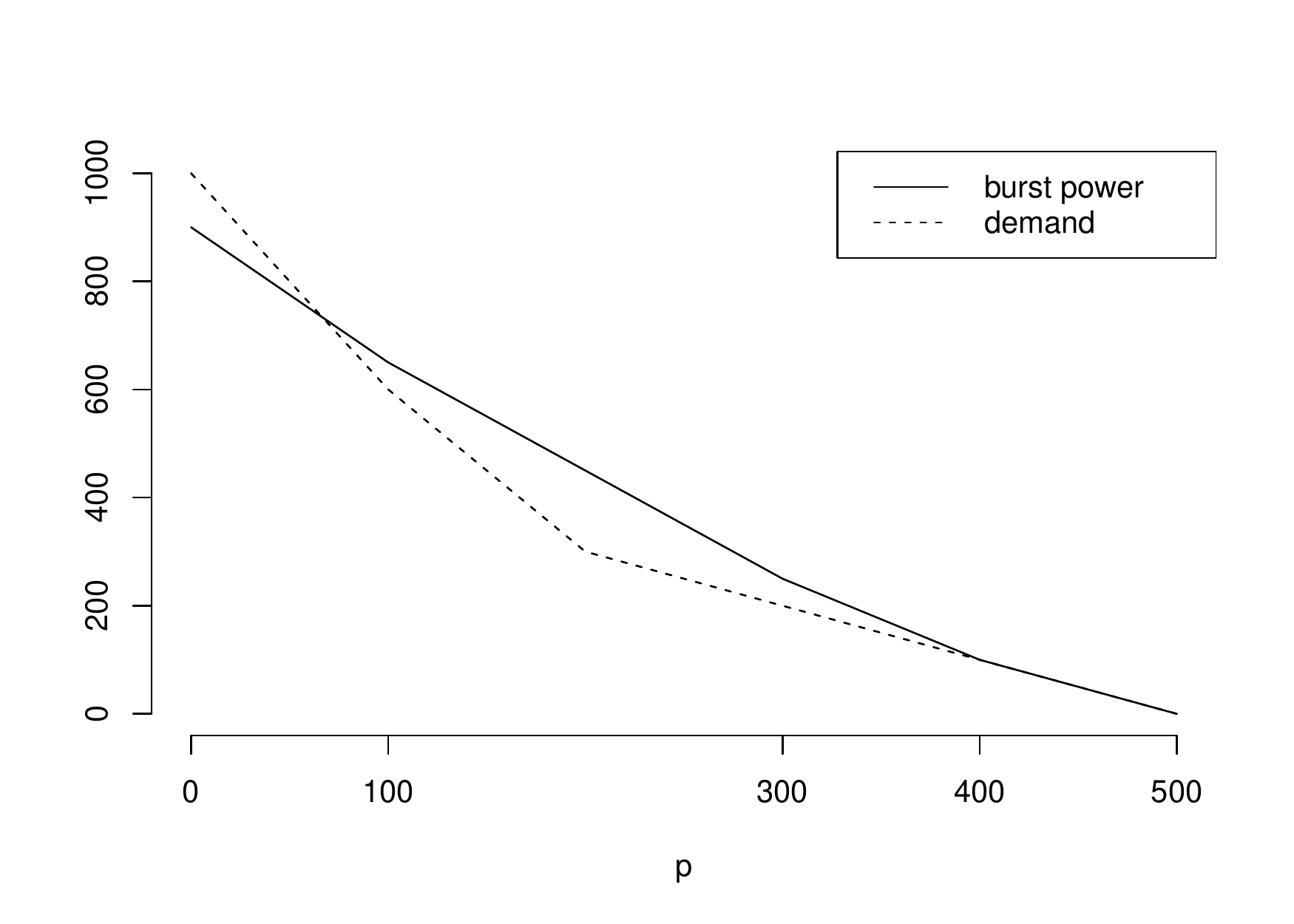}
    \vspace{-2ex}
    \caption{Example~\ref{ex:ggddf}: \emph{energy-power transforms}
      $\epts{e}{0}{p}$ and $\eptd{e}{0}{p}$ at time~$0$ of the
      burst-power profile (of the initial stored-energy configuration)
      and of the demand process $(d(t),\,t\ge0)$.  The set $\pi(0)$
      defined in the proof of Theorem~\ref{theorem:1} is given by
      $\{0,100,300,400,500\}$ and is indicated on the horizontal axis.
      Both $\epts{e}{0}{p}$ and $\eptd{e}{0}{p}$ are convex decreasing
      functions of $p$ and the function $\epts{e}{0}{p}$ is linear
      between successive points of the set $\pi(0)$.  The value of the
      expression~(\ref{eq:11}) is equal to 100
      (the maximum being attained at $p=0$), which, from
      Theorem~\ref{theorem:1} is the minimum unserved energy demand
      over all policies.}
    \label{fig:epts}
  \end{figure}

  Finally, the heuristic greedy policy studied in~\cite{ESDT} arranges
  the stores in some order and completely prioritises the use of
  earlier stores (with respect to that order) over later ones.  It may
  be checked that, with respect to the arrangement of the stores in
  either ascending or descending order of capacity, the suggested
  policy fails to empty the stores and hence fails to minimise
  unserved energy demand.
\end{example}

\section{Cross-charging of stores}
\label{sec:disch-with-cross}

We continue to study the situation in which the demand process
$(d(t),\,t\ge0)$ is nonnegative over some time interval, which we
again take, without loss of generality, to be the entire positive
half-line, so that $d(t)\ge0$ for all $t\ge0$.  We now allow that
stores may be charging as well as discharging, so that, for each
$i\in\S$, the rates $r_i(t)$ satisfy the rate
constraints~\eqref{eq:3}.  However, we require also that the net
energy supplied by the stores in $\S$ is positive for all $t\ge0$ and
is used to satisfy, again as far as possible in some suitable sense,
the demand process $(d(t),\,t\ge0)$.  Hence we require
\begin{equation}
  \label{eq:19}
    0 \le \sum_{i\in\S:r_i(t)\ge0}r_i(t) +
    \sum_{i\in\S:r_i(t)<0}r_i(t)/\eta_i \le d(t),
  \qquad t \ge 0,
\end{equation}
where, as previously defined, $0<\eta_i\le1$ is the round-trip
efficiency of each store $i\in\S$.  This corresponds to the situation
in which stores may supply energy to each other---which we refer to as
\emph{cross-charging}---but in which no external energy is available
for the charging of stores.  Our main aim in this section is to show
that, while such cross-charging may often assist in serving an
external demand, it is also possible to identify circumstances, of
considerable importance in practical applications, in which it does
not.  We give first a simple example in which cross-charging is
helpful.
\begin{example}
  \label{example:cc}
  Consider two stores with capacity and rate constraints~\eqref{eq:1}
  and~\eqref{eq:3} given by $(\overline E_1,P_1)=(2,2)$,
  $(\overline E_2,P_2)=(4,1)$ and $P_i=P'_i$ for $i=1,2$, and assume
  further that each store~$i$ has a round-trip efficiency~$\eta_i=1$.
  We take the demand process $(d(t),\,t\ge0)$ to be given by $d(t)=3$
  for $t\in[0,1]$ and $t\in[3,4]$ and $d(t)=0$ otherwise.  Finally, we
  assume that the two stores are full at time~$0$.  Then it is
  straightforward to see the only way in which the demand signal can
  be completely served for all $t\ge0$ is to fully empty store~$1$ and
  use one unit of energy from store~$2$ during the time period
  $[0,1]$, to fully recharge the store~$1$ from store~$2$ during the
  time period $[1,3]$, and then to fully discharge both stores during
  the time period $[3,4]$.
\end{example}

In the above example there are initial and final periods of high
demand, requiring service from both stores, separated by a period of
low demand during which the low capacity store may be recharged from
the high capacity store.  This is the typical situation in which
cross-charging may be useful.  However, for round-trip efficiencies
which are less than one, such cross-charging is inherently wasteful of
energy.  The following theorem is now fundamental.

\begin{theorem}
  \label{theorem:2}
  Suppose that the demand process $(d(t),t\ge0)$ is (weakly) decreasing
  for all~$t\ge0$, and is such that it may be served, possibly with
  cross-charging and subject to the conditions~(\ref{eq:19}), by the
  given stored-energy configuration.  Then it may also be served
  without cross-charging (i.e.\ with $r_i(t)\ge0$ for all $i$ and for
  all $t$) by the use of the \ggddf\ policy.
\end{theorem}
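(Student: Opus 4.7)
The plan is to reduce the theorem to Corollary~\ref{corollary:1}, by which the \ggddf\ policy completely serves $d$ on $[0,\infty)$ if and only if $\epts{e}{0}{p}\ge\eptd{e}{0,\infty}{p}$ for every $p\ge0$. It therefore suffices to establish this pointwise inequality, using only the existence of a feasible cross-charging policy together with the monotonicity of $d$.

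The engine of the argument is the cumulative bound
\[
\int_0^\tau d(u)\,du \;\le\; \int_0^\tau s^0(u)\,du, \qquad \tau\ge0,
\]
which I would obtain without any monotonicity hypothesis as follows. Splitting each rate into positive and negative parts, $r_i(t)=r_i^+(t)-r_i^-(t)$, the equality in~\eqref{eq:19} gives
$\sum_{i\in\S}r_i(t)-d(t)=\sum_{i\in\S}r_i^-(t)(1/\eta_i-1)\ge 0$,
so that
$\sum_{i\in\S}\bigl(E_i(0)-E_i(\tau)\bigr)=\int_0^\tau\sum_{i\in\S}r_i(u)\,du\ge\int_0^\tau d(u)\,du$.
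On the other hand the capacity constraint $E_i(\tau)\ge 0$ together with the rate bound $r_i\le P_i$ yield $E_i(0)-E_i(\tau)\le\min(E_i(0),P_i\tau)$, and a direct interchange of summation and integration using~\eqref{eq:7} gives $\sum_{i\in\S}\min(E_i(0),P_i\tau)=\int_0^\tau s^0(u)\,du$, closing the chain.

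The monotonicity is then brought in as follows. For each $p\ge 0$ let $\tau_p=\sup\{u\ge 0: d(u)\ge p\}$; since $d$ is weakly decreasing, the level set $\{u:d(u)\ge p\}$ is the initial interval $[0,\tau_p]$, and hence $\eptd{e}{0,\infty}{p}=\int_0^{\tau_p}\bigl(d(u)-p\bigr)\,du$. Subtracting $p\tau_p$ from the cumulative bound and then passing to the positive part yields
\[
\eptd{e}{0,\infty}{p}\;\le\;\int_0^{\tau_p}\bigl(s^0(u)-p\bigr)\,du\;\le\;\int_0^\infty\max\bigl(0,s^0(u)-p\bigr)\,du\;=\;\epts{e}{0}{p},
\]
which is exactly the inequality required by Corollary~\ref{corollary:1}.

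I expect the only real subtlety to lie in recognising that monotonicity is precisely what pins $\{u:d(u)\ge p\}$ to an initial interval and so aligns the level-set decomposition of $\eptd{e}{0,\infty}{p}$ with the initial-interval form of the cumulative bound; without this alignment the cumulative bound is not strong enough, which is consistent with Example~\ref{example:cc}, where cross-charging strictly enlarges the class of serviceable demand processes in the absence of monotonicity.
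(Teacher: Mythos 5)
Your proof is correct, but it takes a genuinely different route from the paper's. The paper argues by contraposition on the first failure time $t^*$ of the \ggddf\ policy: using the order-preservation property of \ggddf\ together with the monotonicity of $d$, it shows that by time $t^*$ the \ggddf\ policy has already extracted the maximum possible amount of energy from the fleet, so that any policy, with or without cross-charging, that served all demand on $[0,t^*]$ would be left with the same empty stores and would also fail at $t^*$. You instead verify the transform criterion of Corollary~\ref{corollary:1} directly: the cumulative bound $\int_0^\tau d(u)\,du\le\int_0^\tau s^0(u)\,du$ holds for \emph{any} feasible policy satisfying~\eqref{eq:19} (cross-charging can only lose energy when $\eta_i<1$, and the fleet cannot export more than $\int_0^\tau s^0$ in time $\tau$), and monotonicity of $d$ pins each level set $\{u:d(u)\ge p\}$ to an initial interval $[0,\tau_p]$, converting that bound into $\eptd{e}{0,\infty}{p}\le\epts{e}{0}{p}$ for all $p\ge0$. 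Your argument is essentially the necessity argument for~\eqref{eq:17} in the proof of Theorem~\ref{theorem:1}, extended to cross-charging policies; it makes completely transparent why monotonicity is needed (it is exactly what aligns the level sets with initial intervals, consistent with Example~\ref{example:cc}), and the intermediate cumulative bound is monotonicity-free and reusable. The paper's argument is more self-contained and additionally exhibits where and why \ggddf\ first fails. Two small points to make explicit: for $p>0$ servability forces $\tau_p<\infty$ (otherwise $\int_0^\infty d(u)\,du=\infty$ exceeds the finite stored energy), while the case $p=0$ is just the cumulative bound as $\tau\to\infty$; and the boundary point $u=\tau_p$ contributes nothing to the integrals, so the possible half-open form of the level set is harmless.
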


\begin{proof}
  We prove the implication of the theorem by proving the
  contrapositive: if the \ggddf\ policy (which does not permit
  cross-charging) cannot serve the demand process $d(t)$, then neither
  can any other policy, including those that make use of
  cross-charging.
  
  Clearly, if $d(0)>\sum_{i \in \S: E_i(0)>0} P_i$ (so that there is
  insufficient power at time $t=0$) or if
  $\int_0^{\infty} d(t)\mathrm{d}t > \sum_{i \in \S} E_i(0)$ (i.e.\
  the total energy in all the stores is insufficient to meet the
  demand process), then no policy is able to serve the demand process.
  Hence, we consider only the remaining cases where the \ggddf\ policy
  fails to fully serve the demand.  In these cases, there must be a
  first failure time $t=t^*$, characterised by the condition
  \begin{equation}
    \label{eq:29}
    \sum_{i \in \S^*} P_i < d(t^*),
  \end{equation}
  where $\S^* = \{i \in \S: E_i(t^*)>0\}$ is the set of stores that are
  not empty at time~$t^*$.
  
  The \ggddf\ policy has the property that it preserves through time
  the ordering of discharge-durations $E_i(t)/P_i$, except for
  equalisations.  Since the stores in $\S^*$ have the highest values
  of $E_i(t^*)/P_i$ at time~$t^*$ (the other stores being empty at
  that time), this must also have been the case for all times $t<t^*$.
  Since, further, the demand process $(d(t),t\ge0)$ is (weakly)
  decreasing, it now follows from~(\ref{eq:29}) that,
  under the \ggddf\ policy, all the stores in the set~$\S^*$ have
  served energy at their maximum rates (i.e.\ $r_i(t) = P_i$ for all
  $i\in\S^*$) for all times~$t\in[0,t^*]$.  Since also the stores
  $i\notin\S^*$ are empty at time~$t^*$, it now follows that at
  time~$t^*$ the \ggddf\ policy has extracted the maximum possible
  amount of energy from the entire set of stores~$\S$ over
  $t\in[0,t^*]$.  Hence, under any policy (with or without
  cross-charging) which succeeded in serving all demand over the time
  interval $[0,t^*]$, the stores $i\notin\S^*$ would also be empty at
  time~$t^*$, so that that policy would also fail at that time. 
 \end{proof}

Theorem~\ref{theorem:2} has the following companion result.

\begin{theorem}
  \label{theorem:3}
  Suppose that the demand function $(d(t),t\in[0,T])$ is \emph{(weakly)
    increasing} on $[0,T]$, and that all stores are full at time~$0$.
  Then if $(d(t),t\in[0,T])$ can be served, it can be served without
  cross-charging (i.e.\ with $r_i(t)\ge0$ for all $i$ and for all
  $t$), and with at least as much energy remaining in each store at
  the final time~$T$.
\end{theorem}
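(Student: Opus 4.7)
The plan is to reduce the first assertion to Corollary~\ref{corollary:1} and to obtain the per-store comparison from the same construction. Let $\pi$ denote the hypothesised cross-charging policy serving~$d$. By Corollary~\ref{corollary:1}, it suffices to verify $\epts{e}{0}{p}\ge\eptd{e}{0,T}{p}$ for every $p\ge0$; the cases $p=0$ (the total-energy bound, already implied by $\pi$'s feasibility) and $p\ge d(T)$ (right-hand side zero) are immediate, so I would focus on $0<p<d(T)$.

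Since~$d$ is weakly increasing, $t_p:=\inf\{t:d(t)>p\}\in[0,T)$ and all above-$p$ demand is concentrated on the tail: $\eptd{e}{0,T}{p}=\int_{t_p}^T(d(t)-p)\,dt$. The central step is a bound saying that, from initially full stores, no policy (with or without cross-charging) can deliver more than $\epts{e}{0}{p}$ of above-$p$ energy to demand. Intuitively cross-charging is energy-neutral at best (when $\eta_i=1$) and strictly lossy otherwise, and with full initial stores every cross-charge must be preceded by a discharge, so above-$p$ serving capability cannot be built up from below. I would make this rigorous by an exchange argument: each cross-charging event (in which a donor~$i$ supplies rate $c/\eta_j$ to a recipient~$j$ at some~$t_0$, and $j$ later discharges the cross-charged energy) is removed by deferring the donor's supply so that it serves demand directly at the recipient's later discharge time. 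Iterating these exchanges eliminates cross-charging while preserving the above-$p$ service, yielding $\eptd{e}{0,T}{p}\le\epts{e}{0}{p}$; Corollary~\ref{corollary:1} then supplies the desired non-cross-charging policy (for instance \ggddf).

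The per-store energy claim follows from applying the same exchange throughout $[0,T]$: each eliminated cross-charge saves the recipient a round-trip (strictly so when $\eta_i<1$) while the donor's aggregate discharge is unchanged, so the resulting non-cross-charging policy~$\pi'$ satisfies $E_i^{\pi'}(T)\ge E_i^\pi(T)$ for every~$i$. The principal obstacle is global rate feasibility of the deferrals: an isolated exchange may force the donor up to its maximum rate~$P_i$ at the deferred time, so when many exchanges are composed one must spread each deferred output across intervals of donor slack, or chain exchanges across multiple stores to distribute the load. Both hypotheses of the theorem are used here---initial fullness prevents hidden charging dynamics at $t=0$, and the monotonicity of~$d$ provides the structural slack needed for donors to retain rate headroom at later times---and the necessity of these hypotheses is exhibited by Example~\ref{example:cc}, where monotonicity fails and cross-charging is genuinely required.
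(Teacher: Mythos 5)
There is a genuine gap at the central step. Your key claim --- that from initially full stores no policy, with or without cross-charging, can deliver more than $\epts{e}{0}{p}$ of above-$p$ energy, because ``every cross-charge must be preceded by a discharge, so above-$p$ serving capability cannot be built up from below'' --- is false as stated: Example~\ref{example:cc} of the paper is a direct counterexample. There the stores are initially full, and for $p=2$ one computes $\epts{e}{0}{2}=1$ while $\eptd{e}{0,T}{2}=2$, yet the demand is fully served by a cross-charging policy; so cross-charging really can build up above-$p$ serving capability from full stores. Hence initial fullness alone cannot carry the argument, and the monotonicity of $d$ must enter the proof of the bound in an essential, quantitative way --- but in your write-up it appears only as a closing remark about ``structural slack.'' Moreover, the exchange argument that is supposed to deliver the bound is exactly where the difficulty lies, and you acknowledge but do not resolve it: deferring a donor's output to the recipient's later discharge times must respect the donor's rate constraint simultaneously across all composed exchanges, and ``spreading the deferred output across intervals of donor slack'' is the whole problem restated, not a proof. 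The per-store energy claim inherits the same gap; note also that the policy supplied by Corollary~\ref{corollary:1} (the \ggddf\ policy) need not leave at least as much energy in each \emph{individual} store as the original policy, so that part of the statement would require a separate construction in any case.

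For contrast, the paper's proof avoids all of this by a time-and-space reversal: setting $E^*_i(t)=\overline E_i - E_i(T-t)$ turns the increasing demand served from full stores into a decreasing demand $d^*(t)=d(T-t)$ served by a configuration that must end empty, to which Theorem~\ref{theorem:2} applies (its proof is a short contrapositive about the first failure time, not an exchange argument); transforming back yields a non-cross-charging policy leaving exactly the same energy in each store when $\eta_i=1$, and the case $\eta_i<1$ is then handled by a pointwise domination of the store levels by those of an auxiliary lossless system. If you wish to salvage your route, you would need to prove the inequality $\epts{e}{0}{p}\ge\eptd{e}{0,T}{p}$ by an argument that uses the monotonicity of $d$ directly (for instance by adapting the necessity argument in the proof of Theorem~\ref{theorem:1} to account for energy routed through other stores); as written, the proposal does not do this.
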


\begin{proof}
  We first consider the case where every store $i\in\S$ has a
  round-trip efficiency $\eta_i=1$, so that there is no loss of energy
  in cross-charging.  Here the result can be deduced from
  Theorem~\ref{theorem:2} by an argument involving time and space
  reversal.  Consider any policy for the use of the stores, possibly
  involving cross-charging, which serves the given demand process
  $(d(t),t\in[0,T])$.  For each store $i\in\S$, let $E_i(t)$ be the
  corresponding level of store~$i$ at each time~$t$; define a new
  sequence of storage levels $(E^*_i(t),t\in[0,T])$ on the
  interval~$[0,T]$ for the store~$i\in\S$ by
  \begin{equation}
    \label{eq:21}
    E^*_i(t) = \overline E_i - E_i(T-t),
    \qquad t \in [0,T].
  \end{equation}
  The set of such sequences over all $i\in\S$ corresponds to the use
  of the stores, with the same input and output rate constraints and
  again with no loss of energy in cross-charging, to serve fully a
  demand process $(d^*(t),t\in[0,T])$ given by $d^*(t) = d(T-t)$ for
  all $t$, with the initial level of each store~$i$ being given
  (from~\eqref{eq:21}) by
  $E^*_i(0) = \overline E_i - E_i(T) \le \overline E_i$ and with the
  final level of every store being given by~$E^*_i(T) = 0$ (again
  from~\eqref{eq:21} since, by the hypothesis of the theorem,
  $E_i(0)=\overline E_i$ for all~$i$).  Further, this pattern of use
  of the stores involves cross-charging if and only if the use of the
  original set of sequences $(E_i(t),t\in[0,T])$, for $i\in\S$, to
  serve the original demand process $(d(t),t\in[0,T])$ similarly
  involves cross-charging.  Since the original demand process is
  increasing, the demand process $(d^*(t),t\in[0,T])$ is decreasing
  and so, by Theorem~\ref{theorem:2}, it may also be served fully,
  without cross-charging, by a modified set of sequences of store
  levels $(\hat E^*_i(t),t\in[0,T])$, $i\in\S$, with
  $\hat E^*_i(0) = E^*_i(0)$ and $\hat E^*_i(T) = E^*_i(T) = 0$ for
  all~$i$ (this last result following since the sets of processes
  $(E^*_i(t),t\in[0,T])$, $i\in\S$, and $(\hat E^*_i(t),t\in[0,T])$,
  $i\in\S$, both start at the same set of levels, serve the same total
  volume of energy over the period $[0,T]$, and, since the former set
  of processes fully empties the set of stores, so also must the
  latter).  Finally, transforming back in time and space, it follows
  that set of sequences of store levels $(\hat E_i(t),t\in[0,T])$,
  $i\in\S$, given by
  \begin{displaymath}
    \hat E_i(t) = \overline E_i - \hat E^*_i(T-t),
    \qquad t \in [0,T],
  \end{displaymath}
  fully serves the original demand process without cross-charging, and
  that $\hat E_i(0) = \overline E_i$ and $\hat E_i(T) = E_i(T)$ for
  all $i\in\S$, as required, and indeed so that in this case the
  modified process, with cross-charging eliminated, leaves exactly the
  same volume of energy in each store at the final time~$T$.
  
  We now consider the general case $\eta_i\le1$ for all $i\in\S$.
  Again consider any policy for the use of the stores, possibly
  involving cross-charging, which serves the given demand process
  $(d(t),t\in[0,T])$.  In particular the rates~$r_i(t)$ associated
  with this policy satisfy~(\ref{eq:19}) with the second inequality in
  that expression replaced with equality.  As usual, we denote by
  $E_i(t)$ the energy level in each store $i\in\S$ at each
  time~$t\ge0$.  Consider also a modified model in which the stores
  and demand process remain the same, except only that the round-trip
  efficiencies~$\eta_i$ are all replaced by one.  It is clear that we
  may choose a policy (possibly including cross-charging), i.e.\ a set
  of rate functions $(\hat r_i(t),t\ge0),\,i\in\S$, for this modified
  model in which again all demand is served, i.e.\
  \begin{equation}
    \label{eq:30}
    \sum_{i\in\S}\hat r_i(t) = d(t), \qquad t\ge0,
  \end{equation}
  and in which, for each time~$t\ge0$, the corresponding energy levels
  $\hat E_i(t)$ in the stores satisfy
  \begin{equation}
    \label{eq:25}
    \hat E_i(t) \ge E_i(t), \qquad i\in\S.
  \end{equation}
  To see this observe that we may, inductively over time, choose the
  rates $\hat r_i(t)$, and hence the store levels $\hat E_i(t)$ as
  follows: at each time~$t$, for those $i$ such that $r_i(t) < 0$, set
  $\hat r_i(t) = r_i(t)$ unless $E_i(t) = \overline E_i$ (store~$i$ is
  full) in which case set $\hat r_i(t) = 0$; similarly, at each
  time~$t$, for those~$i$ such that $r_i(t) \ge 0$, choose
  $0 \le \hat r_i(t) \le r_i(t)$ and such that equation~(\ref{eq:30})
  is satisfied for the modified model.  Arguing inductively, it is
  easy to see that, at each successive point in time, this modified
  model preserves all rate and capacity constraints together with the
  relation~(\ref{eq:25}).  (Informally we might think of the modified
  model as corresponding to the idea that the same external demand is
  notionally served at the same rates from the same stores as in the
  unmodified model, while the perfect round-trip efficiencies of the
  stores in the modified model enable cross-charging to be used to
  ensure that, at all times, the relation~(\ref{eq:30}) is satisfied
  and all store levels are greater than or equal to their levels in
  the unmodified model.)  By the result of the theorem already
  proved for the case of perfect round trip efficiency, the above
  policy may now be further modified so as to serve the same demand
  process while eliminating cross-charging and leaving the final
  volume of energy (at time~$T$) in each store $i\in\S$ unchanged by
  this elimination.  Since no cross-charging is now taking place, this
  further modified policy now also serves the demand in the original
  system with $\eta_i\le1$ for all~$i$, again with the same final
  volume of energy in each store at time~$T$, and this volume is
  therefore greater than or equal to that which was present in the
  original model when cross-charging was used.
\end{proof}

The need, in Theorem~\ref{theorem:3}, for some condition such as the
requirement that all stores are full at time~$0$ is shown by
Example~\ref{example:cc2} below.

\begin{example}
  \label{example:cc2}
  Consider two stores with capacity and rate constraints~\eqref{eq:1}
  and~\eqref{eq:3} given by $(\overline E_1,P_1)=(2,1)$,
  $(\overline E_2,P_2)=(1,1)$, with $P_i=P'_i$ for $i=1,2$, and with
  round-trip efficiencies~$\eta_i=1$ for $i=1,2$.  Consider a time
  horizon $T=2$ and let the demand process $(d(t),\,t\in[0,2])$ to be
  given by $d(t)=0$ for $t\in[0,1]$ and $d(t)=2$ for $t\in[1,2]$.
  Finally, assume that, at time~$0$, store~$1$ is full while store~$2$
  is empty.  Then it is easy to see the only way in which the demand
  signal can be completely served for all $t\in[0,2]$ is to fully
  charge store~$2$ from store~$1$ during the time period $[0,1]$ and
  to then fully discharge both stores during the time period $[1,2]$.
  Hence in this case the need for cross-charging cannot be dispensed
  with.
\end{example}

Theorems~\ref{theorem:2} and \ref{theorem:3} have the following
corollary.

\begin{corollary}
  \label{corollary:3}
  Suppose that the demand process $(d(t),t\in[0,T])$ is (weakly)
  increasing on $[0,T']$ and (weakly) decreasing on $[T',T]$ for some
  $0\le T'\le T$, and that all stores are full at time~$0$.  Then if
  $(d(t),t\in[0,T])$ can be served, it can be served without
  cross-charging, and by the use of the \ggddf\ policy.
\end{corollary}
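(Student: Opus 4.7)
The natural plan is to split $[0,T]$ at the peak $T'$ and apply Theorems~\ref{theorem:3} and~\ref{theorem:2} to the two pieces. Given any policy (possibly involving cross-charging) that serves the full demand on $[0,T]$, let $E_i(T')$ denote the energy it leaves in each store $i\in\S$ at time $T'$. On $[0,T']$ the demand is weakly increasing and all stores are full at $t=0$, so Theorem~\ref{theorem:3} supplies a modified policy $\hat{\pi}$ on $[0,T']$ that serves the demand without cross-charging and leaves energies $\hat{E}_i(T')\ge E_i(T')$ in every store.

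On $[T',T]$ the demand is weakly decreasing, and the restriction of the original policy serves it from the configuration $(E_i(T'))_{i\in\S}$ (possibly with cross-charging). Theorem~\ref{theorem:2} (after a trivial time-shift) therefore implies that the \ggddf\ policy also serves this decreasing demand from $(E_i(T'))_{i\in\S}$ without cross-charging; by Corollary~\ref{corollary:1}, this is equivalent to the pointwise inequality $\epts{e}{T'}{p}\ge\eptd{e}{T',T}{p}$ for all $p\ge0$, where the left side is computed from $(E_i(T'))$. Because $\hat{E}_i(T')\ge E_i(T')$ for every $i$, the burst-power profile $s^{T'}(u)$ is pointwise larger under $(\hat{E}_i(T'))$ than under $(E_i(T'))$, so the same is true for the energy-power transform $\epts{e}{T'}{p}$. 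Corollary~\ref{corollary:1} then yields that \ggddf\ serves the decreasing demand on $[T',T]$ without cross-charging starting from $(\hat{E}_i(T'))$ as well.

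Concatenating $\hat{\pi}$ on $[0,T']$ with this \ggddf\ continuation on $[T',T]$ produces a fully feasible policy that serves the entire demand on $[0,T]$ without any cross-charging, hence lies in the pure discharge model of Section~\ref{sec:pure-discharge-model}. Theorem~\ref{theorem:1} applied on $[0,T]$ therefore guarantees that the \ggddf\ policy run throughout $[0,T]$ from the original (full) initial configuration achieves zero unserved energy demand, which is exactly the conclusion. The main obstacle is the bridging step: transferring the known feasibility on $[T',T]$ from the configuration $(E_i(T'))$ to the distinct configuration $(\hat{E}_i(T'))$ produced by Theorem~\ref{theorem:3}. This is overcome cleanly by invoking the energy-power transform criterion of Corollary~\ref{corollary:1}, which is monotone in the individual store energies; without this step one would need a direct coupling between the two policies, which is less transparent.
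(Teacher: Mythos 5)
Your proof is correct and follows essentially the same route as the paper's: Theorem~\ref{theorem:3} on $[0,T']$, Theorem~\ref{theorem:2} on $[T',T]$, and then Theorem~\ref{theorem:1} to conclude that the \ggddf\ policy serves the whole demand. The only difference is that you make explicit, via the monotonicity of the energy-power transform in the individual store energies and Corollary~\ref{corollary:1}, the bridging step that the paper dismisses as ``clear'' --- namely that feasibility on $[T',T]$ from $(E_i(T'))$ transfers to the larger configuration $(\hat E_i(T'))$ --- which is a worthwhile clarification but not a different argument.
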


\begin{proof}
  This is an application of Theorem~\ref{theorem:3} for the period
  $[0,T']$---including the result that any cross-charging may be
  eliminated in this period without reducing the volume of energy in
  each store at time~$T'$---followed by the use of
  Theorem~\ref{theorem:2} for the period $[T',T]$ (since clearly any
  increase in the volume of energy in each store at the time~$T'$ due
  to the elimination of cross-charging in $[0,T']$ means that we may
  continue to serve the given demand process on the interval $[T',T]$.
  Finally, since no cross-charging is necessary, it follows from
  Theorem~\ref{theorem:1} that the demand $(d(t),t\in[0,T])$ can be
  served by the use of the \ggddf\ policy.
\end{proof}

The following further corollary is an immediate application of
Theorem~\ref{theorem:1} and Corollary~\ref{corollary:3}.

\begin{corollary}
  \label{corollary:4}
  Under the conditions of Corollary~\ref{corollary:3}, and in a
  possibly stochastic environment, the minimisation of the expectation
  of the unserved energy demand~(\ref{eq:4}), or the minimisation of
  any quantile of the distribution of the unserved energy demand, is
  achieved by the use of the \ggddf\ policy.
\end{corollary}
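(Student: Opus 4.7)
The plan is to combine the pathwise statement of Corollary~\ref{corollary:3} with the non-anticipatory nature of the \ggddf\ policy, following the same pattern by which Corollary~\ref{corollary:2} is deduced from Theorem~\ref{theorem:1}. Interpreting the unimodal hypothesis as holding pathwise for the random demand $(d(t),t\in[0,T])$, I would first establish pathwise optimality of \ggddf\ against every feasible policy (whether anticipatory or not, and whether using cross-charging or not). On realisations for which the demand can be fully served, Corollary~\ref{corollary:3} already delivers zero unserved energy under \ggddf, which is trivially optimal.

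On realisations for which the demand cannot be fully served, I would argue that cross-charging still cannot reduce unserved energy. On the increasing segment $[0,T']$ the substitution procedure from the proof of Theorem~\ref{theorem:3} replaces any cross-charging policy with a pure-discharge policy that leaves at least as much energy in each store at time~$T'$. On the decreasing segment $[T',T]$, I would upgrade the contrapositive argument used in the proof of Theorem~\ref{theorem:2}: its energy accounting shows that, up to the first \ggddf\ failure time, \ggddf\ has extracted the maximum possible total energy from the stores, so no policy --- cross-charging or otherwise --- can have delivered strictly more useful energy to the external demand than \ggddf\ does. Combining these two reductions with the pure-discharge optimality of \ggddf\ from Theorem~\ref{theorem:1} yields pathwise optimality of \ggddf\ within the full admissible class.

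Having secured pathwise optimality, I would observe that \ggddf\ is non-anticipatory, since its rates depend only on the instantaneous value $d(t)$ and the current configuration $(E_j(t),j\in\S)$, and is therefore implementable in the stochastic environment. Pathwise optimality then delivers both conclusions at once: integrating the pathwise inequality $\int_0^T (d(t)-\bar d^{\mathrm{GGDDF}}(t))\,dt \le \int_0^T (d(t)-\bar d^{\pi}(t))\,dt$ with respect to the probability measure yields minimisation of $\E$ of unserved energy, while pathwise domination of nonnegative random variables implies stochastic dominance and hence preserves the ordering of every quantile of the distribution.

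I expect the main obstacle to be the pathwise step when full service is impossible, since the cross-charging-elimination arguments of Theorems~\ref{theorem:2} and~\ref{theorem:3} were formulated only in the fully-serveable regime; one has to verify carefully that the underlying energy-accounting reductions (time-and-space reversal on $[0,T']$ and maximum-extraction at the failure time on $[T',T]$) continue to operate when demand exceeds the stores' cumulative capability, rather than merely reaching it.
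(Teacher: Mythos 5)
Your top-level architecture is exactly the paper's: Corollary~\ref{corollary:4} is presented there as an ``immediate application'' of Theorem~\ref{theorem:1} and Corollary~\ref{corollary:3}, i.e.\ pathwise elimination of cross-charging combined with the pure-discharge optimality of the \ggddf\ policy and its non-anticipatory character, after which the expectation and quantile statements follow exactly as in Corollary~\ref{corollary:2} from pathwise domination of the nonnegative unserved-energy random variables. That final step of yours is correct and is the intended reading.

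However, the obstacle you flag in your last paragraph is a genuine gap in your argument as written, and the specific repairs you sketch do not go through. On $[0,T']$ the time-and-space reversal in the proof of Theorem~\ref{theorem:3} is applied to a policy that \emph{fully} serves an \emph{increasing} demand; when full service is impossible the competitor serves some $\hat d\le d$ whose restriction to $[0,T']$ need not be increasing, so the reversed process need not be decreasing and Theorem~\ref{theorem:2} cannot be invoked. On $[T',T]$ the contrapositive argument of Theorem~\ref{theorem:2} controls only what happens up to the \emph{first} failure time and compares policies started from a \emph{common} configuration, whereas here the competitor and the \ggddf\ policy reach $T'$ with different configurations and one must also account for unserved energy \emph{after} the failure time. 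A cleaner way to close the gap, staying within the paper's toolkit, is to lower-bound the unserved energy of an arbitrary cross-charging policy directly by the expression~\eqref{eq:11}: for each $p\ge0$, unimodality makes $\tau_p=\{t\in[0,T]:d(t)\ge p\}$ an interval of length $m(\tau_p)$, and since every store starts full no store ever holds more than $E_i(0)=\overline E_i$, so (using $\eta_i\le1$ and the rate constraints) the external energy delivered during $\tau_p$ is at most $\sum_{i\in\S}\min\bigl(P_i\,m(\tau_p),E_i(0)\bigr)=\int_0^{m(\tau_p)}s^0(u)\,du$; this yields unserved energy at least $\eptd{e}{0,T}{p}-\epts{e}{0}{p}$ for every $p$, matching the value achieved by the \ggddf\ policy in Theorem~\ref{theorem:1} and completing the pathwise comparison without any appeal to Theorems~\ref{theorem:2} and~\ref{theorem:3} in the non-servable case.
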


An important application of the above result is to the frequently
occurring case where stores may be fully recharged overnight, and
there is a single period of shortfall during the day which is
\emph{unimodal} in the sense that it is monotonic increasing and then
decreasing as in the statement of Corollary~\ref{corollary:3}.  One
may assume that there is no surplus energy available for charging any
of the stores during this period.  Then, in a possibly stochastic
environment, the expectation of the unserved energy
demand~(\ref{eq:4}) is minimised by the use of the unique \ggddf\
policy.

Finally in this section, and for completeness, Theorem~\ref{theorem:4}
below gives a useful variation of Theorem~\ref{theorem:3}.  As should
be clear from the statement of Theorem~\ref{theorem:3} itself, the
condition of that theorem that ``all stores are full at time~$0$'' may
be relaxed subject to the additional restriction of some minimum level
on the demand function.  Theorem~\ref{theorem:4} makes this idea
precise.  It does of course also have corollaries analogous to
Corollaries~\ref{corollary:3} and~\ref{corollary:4} above.

\begin{theorem}
  \label{theorem:4}
  Consider any initial stored-energy configuration $(E_i(0), i\in\S)$.
  Let $\S'=\{i \in \S: E_i(0) < \overline{E}_i\}$ and define
  $u_0=\min_{i \in \S'}(E_i(0)/P_i)$, with $u_0=\infty$ if all stores
  in $\S$ are initially full.  Let $\S_1$ be the set of stores~$i$
  such that $\overline E_i/P_i \le u_0$.  (Note that from the
  definition of $u_0$ all the stores in the set~$\S_1$ are necessarily
  full.)  Define also $\S_2=\S\setminus\S_1$ and let
  $k=\sum_{i\in\S_2}P_i$.  Suppose that the demand function
  $(d(t),t\in[0,T])$ is (weakly) increasing, further satisfies
  $d(0)\ge k$, and may be served possibly with the use of
  cross-charging.  Then it may also be served without cross-charging.
\end{theorem}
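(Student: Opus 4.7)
The plan is to reduce Theorem~\ref{theorem:4} to Theorem~\ref{theorem:3} by considering a modified problem with reduced capacities. Define the modified problem to have capacities $\tilde{\overline E}_i = E_i(0)$ for all $i\in\S$; then in the modified problem, all stores are full at time~$0$, matching the hypothesis of Theorem~\ref{theorem:3}. Any no-cross-charging policy only discharges and so never saturates the capacity constraint~\eqref{eq:1} from above, which means that any such policy is valid in the original problem if and only if it is valid in the modified one.

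The central step is to establish the following key claim: under the hypotheses of the theorem (weakly increasing $d$ with $d(0)\ge k$, and servability of $d$ in the original problem with cross-charging), $d$ is also servable in the modified problem with cross-charging. Informally, the condition $d(t)\ge k$ for all $t\in[0,T]$ (which follows from $d(0)\ge k$ and weak monotonicity of $d$) means that there is always sufficient external demand to absorb any flow that would otherwise be used for cross-charging; accordingly, any cross-charging in the original that would push a store $i\in\S'$ above its initial level $E_i(0)$ can be eliminated by instead using that flow to serve external demand directly, while any consequent shortfall in $i$'s later discharge can be covered by a correspondingly shifted discharge from the original supplier store. The role of the specific threshold $k$, and of the decomposition of $\S$ into $\S_1$ and $\S_2$ according to $u_0$, is to ensure that the rate constraints $P_i$ are never violated under these reshufflings, and that the new policy continues to respect the weak monotonicity of $d$; this is where the main obstacle lies, and requires careful bookkeeping to verify that the modified cross-charging policy is feasible at all times.

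Once the modified problem is known to be servable with cross-charging, Theorem~\ref{theorem:3} directly yields a no-cross-charging policy for $d$ on $[0,T]$ in the modified problem, starting from the (now modified-full) configuration $(E_i(0))$. This policy, using no cross-charging, is then also valid in the original problem, completing the proof. The generalisation from perfect round-trip efficiency to $\eta_i\le 1$ would follow by the same auxiliary construction used in the proof of Theorem~\ref{theorem:3}, namely by reducing the $\eta_i\le 1$ case to the $\eta_i=1$ case via a dominating policy in which cross-charging incurs no loss.
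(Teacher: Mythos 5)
Your reduction has the right target---engineer a situation in which all stores are full so that Theorem~\ref{theorem:3} applies---but the step you yourself flag as ``the main obstacle'' is in fact the entire content of the theorem, and you have not supplied it. Your key claim is that if $d$ is servable with cross-charging from the configuration $(E_i(0),\,i\in\S)$ under the true capacities $\overline E_i$, then it remains servable with cross-charging after the capacities are cut down to $E_i(0)$. That claim is true, but essentially only because it is a consequence of the theorem being proved: servable with cross-charging implies servable without (the theorem), which implies servable in the capacity-reduced problem, since a pure-discharge policy never meets the upper capacity bound. Proving the claim directly, as your sketch requires, means surgically modifying an arbitrary cross-charging policy---rerouting charge flows to external demand and compensating the recipient store's later discharges by extra discharge from the supplier---while verifying that the rate constraints $P_i$, $P'_i$ are respected at every later time; this is precisely the bookkeeping you leave unverified, and it is not obviously easier than the theorem itself. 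As written the argument is therefore incomplete (or circular) at its central point.

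The paper's proof goes in the opposite direction and avoids this difficulty entirely: it \emph{enlarges} the capacities of the stores in $\S_2$ to $\hat E_i = E_i(0) + T'P_i$ with $T'=\max_{i\in\S_2}(\overline E_i - E_i(0))/P_i$, and extends time \emph{backwards} to $[-T',T]$, prepending the constant demand $\hat d(t)=k$ on $[-T',0]$. Starting from full modified stores at time $-T'$, serving $k$ at full rate from the stores in $\S_2$ drains each of them to exactly $E_i(0)$ by time $0$, after which the original cross-charging policy serves $d$ on $[0,T]$ (the enlarged capacities can only help). The extended demand is still weakly increasing precisely because $d(0)\ge k$---this is where that hypothesis enters---so Theorem~\ref{theorem:3} applies on $[-T',T]$ from a genuinely full configuration, and the resulting no-cross-charging policy, restricted to $[0,T]$, only discharges and is therefore feasible with the original capacities. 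If you wish to salvage your route you must prove your capacity-capping lemma from scratch; otherwise the backward-extension construction is the clean way to manufacture the ``all stores full'' hypothesis.
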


\begin{proof}
  Consider a modified set of store capacities $(\hat E_i, i\in\S)$
  given by
  \begin{equation}
    \label{eq:22}
    \hat E_i =
    \begin{cases}
      \overline E_i, & \quad i \in \S_1,\\
      E_i(0) + T' P_i, & \quad i \in \S_2,
    \end{cases}
  \end{equation}
  where the constant~$T'=\max_{i\in\S_2}(\overline E_i - E_i(0))/P_i$.
  Observe that these modified store capacities are all at least as
  great as the original capacities.  Let the demand function
  $(d(t),t\in[0,T])$ be as in the statement of the theorem.  Extend
  the time interval $[0,T]$ to $[-T',T]$ and consider the demand
  function $(\hat d(t),t\in[-T',T])$ on this latter interval given by
  \begin{equation}
    \label{eq:23}
    \hat d(t) =
    \begin{cases}
      k, & \quad t \in [-T',0],\\
      d(t) & \quad t \in [0,T].
    \end{cases}
  \end{equation}
  If the modified set of stores are considered to be full at
  time~$-T'$ then their energy content is sufficient to serve the
  demand function $(\hat d(t),t\in[-T',T])$, again possibly with the
  use of cross-charging.  To see this, observe that the stores in the
  set~$\S_2$ may be utilised at their full rates to directly serve the
  demand $\hat d(t)=k$ on the interval $[-T',0]$ (as would be the case
  with the use of the \ggddf\ algorithm on that interval); at time~$0$
  the remaining energy in each store~$i$ is then the original energy
  content $E_i(0)$ of that store in the initial energy configuration
  of the theorem; the demand $\hat d(t)=d(t)$ on the remaining
  interval $[-T',T]$ may now be served as in the statement of the
  theorem (possibly with cross-charging, since the modified stores are
  all at least as large as the original stores).  Since the demand
  function $(\hat d(t),t\in[-T',T])$ is, by construction, increasing
  on $[-T',T]$, it follows from Theorem~\ref{theorem:3} that this
  demand function may also be served by the modified stores without
  cross-charging, and in particular by the use of the \ggddf\ algorithm.
  Since the residual energy content of these stores at time~$0$ is
  again simply the original energy content of the original stores at
  time~$0$, the conclusion of the theorem now follows.
\end{proof}

The need, in Theorem~\ref{theorem:4}, for some condition such as the
requirement that the demand function $(d(t),\,t\ge0)$ has some minimum
level at time~$0$ is again shown by the earlier
Example~\ref{example:cc2}.  The issue here is essentially the same as
in Theorem~\ref{theorem:3}: informally, for a weakly increasing demand
function, it is the combination of an initially sufficiently low level
of demand and spare capacity in the stores at time~$0$, which enables
cross-charging to assist in fully serving the demand function where,
in the absence of such cross-charging, this might not be possible.

\section{Charging and discharging}
\label{sec:charging-discharging}

We now allow that, for every time $t\ge0$, both the demand $d(t)$ and
the rates~$r_i(t)$, $i\in\S$, may be arbitrary (in particular may take
either sign) subject to the constraints given
by~\eqref{eq:1}--\eqref{eq:3} and the condition
\begin{equation}
  \label{eq:26}
      \min(d(t),0) \le \sum_{i\in\S:r_i(t)\ge0}r_i(t) +
    \frac{1}{\eta}\sum_{i\in\S:r_i(t)<0}r_i(t) \le \max(d(t),0),
  \qquad t \ge 0,
\end{equation}
where, throughout this section, we assume that all stores $i\in\S$
have the same round-trip efficiency $\eta_i=\eta$.  (For some
discussion of the case where the stores have different round-trip
efficiencies, see below and also Section~\ref{sec:conc}.)  Thus, when
the demand $d(t)\ge0$ the stores collectively serve energy (possibly
with cross-charging) to assist in meeting that demand and when
$d(t)<0$, corresponding to a surplus of energy external to the stores,
some of that surplus may be supplied to the stores (again possibly
with cross-charging).  Our objective continues to be to manage the
stores so as to minimise, over those times~$t$ such that $d(t)\ge0$,
the long-term unserved energy demand given by~\eqref{eq:4}, or, in a
stochastic environment, the expectation of this quantity.

For any continuous period of time $[T,T']$ over which $d(t)\le0$
(energy may be supplied to charge stores), there is now a theory which
is fully analogous to that developed in
Section~\ref{sec:pure-discharge-model} for discharging stores.  In
particular we may define the \emph{(residual) charge-duration} of any
store $i\in\S$ at any time~$t$ as $(\overline{E}_i - E_i(t))/P'_i$
(the time that would be required to fully charge the store if this was
done at the maximum possible rate). Further the \ggddf\ policy for
discharging-only which is optimal in the sense of
Theorem~\ref{theorem:1} is replaced by the analogous \emph{greedy
  greatest-charge-duration-first} (\ggcdf) policy, which, in the
absence of cross-charging, is similarly optimal for attempting to
accept as much charge as possible over the interval $[T,T']$.  (Note
that this would no longer be true in the absence of a common
round-trip efficiency for the stores.  While one might, in this case,
continue to formulate a result analogous to Theorem~\ref{theorem:1} by
redefining store capacities and input rates in terms of external
energy input rather than output, the results of the present section
would not then continue to hold.)

In many circumstances a reasonable \emph{heuristic} policy for the
management of the stores is given by the use of the \ggddf\ policy to
serve as much of the demand process as possible during periods when
that process is positive, and the use of the \ggcdf\ policy to charge
the stores as rapidly as possible during periods when the demand
process is negative.  This policy again has the attractive property,
discussed in Section~\ref{sec:pure-discharge-model}, of being
non-anticipatory, so that it continues to be fully implementable in a
stochastic environment.  As in Section~\ref{sec:disch-with-cross},
this policy may be expected to work particularly well when continuous
periods of storage discharge are separated by lengthy periods
providing ample time for recharging.  However, in general, it need not
always be \emph{optimal} for the minimisation of long-term unserved
energy.  The reason for this is that the \ggddf\ policy attempts to
equalise as quickly as possible the discharge-durations of the
stores, while the \ggcdf\ policy attempts to equalise as quickly as
possible the corresponding charge-duration times, and these two
objectives in general conflict with each other.  However, we do have
the following theorem.

\begin{theorem}
  \label{theorem:5}
  Suppose that a set of stores~$\S$ is such that $\overline E_i/P_i$
  is the same for all~$i\in\S$, that $P'_i=\alpha P_i$ for
  all~$i\in\S$ for some $\alpha>0$, and that the stores in~$\S$ have a
  common round-trip efficiency $\eta$.  Suppose also that $E_i(0)/P_i$
  is the same for all~$i\in\S$.
  Then an optimal policy for the minimisation of the expectation of
  the unserved energy demand~(\ref{eq:4}) over any subsequent time
  period $[0,T]$ is given by the use of the \ggddf\ policy at those
  times~$t$ such that $d(t)$ is positive, and the use of the \ggcdf\
  policy at those times~$t$ such that $d(t)$ is negative.
\end{theorem}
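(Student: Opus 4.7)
The plan is to reduce the multi-store problem to an equivalent single aggregate store by showing that the hypotheses force the system to remain balanced throughout $[0,T]$ under the proposed policy, and then to invoke greedy-optimality for that single store.

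The first step would be to verify that, under the given hypotheses, the condition of discharge-balance ($E_i(t)/P_i$ constant over $i\in\S$) and the condition of charge-balance ($(\overline E_i - E_i(t))/P'_i$ constant over $i\in\S$) coincide. Indeed, $\overline E_i/P_i$ is by hypothesis a common constant and $P'_i/P_i=\alpha$, so the two quantities differ only by subtraction and rescaling by common factors. Since the initial stored-energy configuration is discharge-balanced (and hence, by the equivalence, charge-balanced) by hypothesis, it suffices to check that both constituent policies preserve this common balance. The \ggddf\ policy, applied to a discharge-balanced configuration, operates every store at a rate proportional to $P_i$ (either all at their maximum rate $P_i$ when $d(t)\ge\hat P$, or uniformly at rate $\lambda P_i$ for some common $\lambda\in[0,1]$ otherwise), so that the discharge-durations $E_i(t)/P_i$ all decrease at the same rate and balance is preserved. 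The \ggcdf\ policy acts analogously on the charge-durations $(\overline E_i - E_i(t))/P'_i$. A straightforward continuity/induction argument in time then shows that the proposed joint policy keeps the system balanced for all $t\in[0,T]$.

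Consequently the aggregate stored energy $\hat E(t)=\sum_{i\in\S} E_i(t)$ evolves exactly as the energy level of a single aggregate store of capacity $\hat E^{*}=\sum_i\overline E_i$, maximum discharge rate $\hat P=\sum_i P_i$, maximum charge rate $\alpha\hat P$, and round-trip efficiency $\eta$, operated greedily (serving $\min(d(t),\hat P)$ whenever $d(t)\ge 0$ and absorbing external power $\min(-d(t),\alpha\hat P)$ whenever $d(t)<0$, subject in each case to the capacity constraint). It remains to show that this aggregate greedy policy minimises the unserved energy demand~(\ref{eq:4}) pathwise over all feasible multi-store policies. For this I would argue that, for any feasible policy, the aggregate trajectory $\hat E(t)$ obeys the same capacity and rate constraints as the aggregate store above, while any simultaneous cross-charging (positive and negative $r_i$ at the same $t$) strictly reduces $\dot{\hat E}(t)$ relative to the non-cross-charging policy serving the same external demand, owing to the loss factor $1-\eta$. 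No multi-store policy can therefore maintain a larger $\hat E$ at any time than the aggregate greedy policy with the same cumulative served demand. An exchange argument on the single aggregate store---partitioning $[0,T]$ into maximal intervals of constant sign of $d(t)$ and applying Theorem~\ref{theorem:1} and its charging analogue within each---then yields that aggregate greedy maximises cumulative served demand at every $T$. Since the proposed policy is non-anticipatory, the reasoning of Corollary~\ref{corollary:2} finally promotes this pathwise optimality to optimality for the expectation (and, indeed, any quantile) of the unserved energy demand.

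The main obstacle I foresee is the last step: pathwise optimality of greedy for the single aggregate store under imperfect efficiency ($\eta<1$). One might worry that holding back some discharge could save a subsequent wasteful recharge; ruling this out requires a careful joint accounting of cumulative served demand and stored energy across any candidate deviation from greedy, showing that any apparent saving in charging losses is exactly matched (or exceeded) by a shortfall in served demand. The cleanest route is a local variational argument: verify that any single perturbation of greedy either leaves cumulative served demand unchanged or strictly reduces it, and then integrate across perturbations.
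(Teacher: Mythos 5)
Your first two paragraphs follow the paper's proof essentially verbatim: the hypotheses make charge-balance and discharge-balance coincide, the initial configuration is balanced, each constituent policy preserves balance, and a balanced fleet is interchangeable with a single aggregate store of energy $\hat E(t)$, discharge rate $\hat P$ and charge rate $\alpha\hat P$. The paper then completes the argument by invoking Remark~\ref{remark:1} (for given total energy, a balanced configuration maximises the set of residual demand processes that can be completely served, and behaves exactly as one large store), concluding that any demand process completely manageable by some policy is completely manageable by the \ggddf/\ggcdf\ policy, and finishing as in Corollary~\ref{corollary:2} via non-anticipation.

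Where you diverge is the final step, and there your argument as written does not close. Partitioning $[0,T]$ into maximal sign-constant intervals and applying Theorem~\ref{theorem:1} (and its charging analogue) within each interval only establishes optimality of greedy for each sub-problem \emph{given its starting energy}; it says nothing about the trade-off between serving demand in one interval and retaining or absorbing energy for a later one, which is precisely the inter-interval coupling that has to be controlled. The missing ingredient is an elementary single-store coupling: (i) of two trajectories of the same store, the one with more energy at time~$t$ can imitate any continuation of the other (truncating charging when full), so it serves at least as much subsequent demand --- ``fuller is weakly better''; and (ii) serving an extra $\delta$ of demand now at the cost of $\delta$ of stored energy is at worst a wash, since $D+E$ increases only through charging and the emptier trajectory has at least as much charging headroom, whence the invariant $D_{\text{greedy}}-D_{\pi}\ge E_{\pi}-E_{\text{greedy}}\ge0$ propagates. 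Together these give pathwise optimality of aggregate greedy. Your stated worry about $\eta<1$ is, moreover, unfounded: the objective~(\ref{eq:4}) does not penalise unabsorbed surplus, and $\eta$ enters only by limiting how much internal energy a given surplus can restore; holding back discharge keeps the store fuller and so can only reduce the surplus absorbed later, never increase the demand ultimately served. So the gap you flag is real in your write-up but is closed by this simple coupling (or, as in the paper, by appealing to the Section~\ref{sec:pure-discharge-model} characterisation via Remark~\ref{remark:1}), not by the delicate variational accounting you anticipate.
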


\begin{proof}

  Under the conditions of the theorem, at any time~$t$, balance with
  respect to charging ($E_i(t)/P_i$ constant over $i\in\S$) is
  equivalent to balance with respect to discharging
  ($(\overline E_i - E_i(t))/P'_i$ constant over $i\in\S$).  The
  initial stored-energy configuration is balanced, and so, since the
  stores have a common round-trip efficiency~$\eta$, the
  \ggddf/\ggcdf\ policy maintains a balanced stored-energy
  configuration for all $t\ge0$.  It now follows from the results of
  Section~\ref{sec:pure-discharge-model}---see in particular
  Remark~\ref{remark:1}---that any demand process $(d(t),\,t\ge0)$
  which may be completely managed (i.e. completely served at times~$t$
  such that $d(t)\ge0$ and completely utilised for charging at
  times~$t$ such that $d(t)<0$) under some policy may similarly be
  completely managed by the use of the \ggddf/\ggcdf\
  policy.  The conclusion of the theorem now follows as for
  Corollary~\ref{corollary:2}.
\end{proof}

\begin{remark}\label{remark:2}
  The essence of Theorem~\ref{theorem:5} is that, under the conditions
  on device parameters given by the theorem, once stores become
  balanced with respect to their initial energy configurations, they
  remain so thereafter and may then be used with the same flexibility
  as a single large store.  It follows as in
  Section~\ref{sec:pure-discharge-model} that the \ggddf/\ggcdf\
  policy drives any initial energy configuration towards such a
  balanced state.  However, cross-charging may speed such convergence.
\end{remark}

\section{Concluding remarks}
\label{sec:conc}

The present paper has considered the optimal or near optimal
scheduling of heterogeneous storage resources for the ongoing
balancing of electricity supply and demand.  The results, which hold
in both deterministic and stochastic environments, are particularly
applicable when storage is used to cover periods of energy shortfall,
such as periods of daily peak demand, and may be completely or mostly
recharged between such periods.  However, in future years storage may
also be used to balance electricity systems over much longer
timescales, such as between summer and winter.  The simultaneous
existence of multiple timescales, together with the physical
characteristics of such storage types as are available, is likely to
lead to a very heterogeneous storage fleet, in which, in particular,
round-trip efficiencies vary considerably.  The optimal dimensioning
and control of such storage presents significant further research
challenges.

Another topic of ongoing research is to extend the storage dispatch
methodology beyond a scenario with a central dispatcher and develop,
for example, hierarchical schemes for aggregation and disaggregation
of energy units.

\section*{Acknowledgements}
\label{sec:acknowledgements}

The authors are grateful to the Isaac Newton Institute for
Mathematical Sciences in Cambridge and to National Grid plc for their
support for the programme in which much of the current work was
carried out.  They also thank Andy Philpott for some helpful
literature discussion.  Finally, the authors are especially grateful
to the reviewers for their most careful and thoughtful readings of the
paper and helpful comments and suggestions.

\bibliography{storage_refs_RSTA}
\bibliographystyle{plain}

\end{document}